\documentclass{amsart}

\usepackage[normalem]{ulem}

\newtheorem{thm}{Theorem}[section]
\newtheorem*{thm2*}{Theorem 3.2}
\newtheorem*{thm4*}{Theorem 3.4}
\newtheorem*{thm5*}{Theorem 3.5}
\newtheorem{cor}[thm]{Corollary}
\newtheorem{example}[thm]{Example}
\newtheorem{lem}[thm]{Lemma}
\newtheorem{prop}[thm]{Proposition}
\newtheorem{defn}[thm]{Definition}

\newcommand{\m}{{\mathfrak m}}
\newcommand{\p}{{\mathfrak p}}

\newcommand{\N}{{\mathbb N}}

\DeclareMathOperator{\ann}{ann}
\DeclareMathOperator{\Ass}{Ass}
\DeclareMathOperator{\mH}{H}
\DeclareMathOperator{\Hom}{Hom}
\DeclareMathOperator{\supp}{Supp}
\DeclareMathOperator{\soc}{soc}

\makeatletter
\@namedef{subjclassname@2020}{%
  \textup{2020} Mathematics Subject Classification}
\makeatother

\begin{document}


\title{Solution Module and Linear Closure}	

\author{I-Chiau Huang}
\address{Institute of Mathematics, Academia Sinica, Taipei, Taiwan (retired).}
\email{ichiauhuang@gmail.com}

\author{I-Hsun Tsai}
\address{Department of Mathematics, National Taiwan University, Taipei, Taiwan.}
\email{ihtsai@math.ntu.edu.tw}
	
\begin{abstract}
We introduce the notion of an ``initial condition'' for a module $M$ over a 
commutative Noetherian local ring $(A,\m)$, allowing for a recursive construction 
of its ``solution modules''. If $M$ has zero-dimensional support, such as the 
residue field of $A$ and those encountered in residual complexes, we demonstrate 
that the solution module $E(M)$ is an injective hull of $M$. The construction of 
$E(M)$ for finitely generated $M$ is explicit and computable, devoid of the need 
for Zorn's lemma. Further, we improve Baer's criterion for a module $N$ with 
zero-dimensional support to be injective: if any $A$-homomorphism from $\m$ 
to $N$ lifts to $A$, then $N$ is injective. 
\end{abstract}
	
\subjclass[2020]{Primary 13C11}
\keywords{essential extension, injective hull, injective module, inverse polynomial, residual complex}		
\maketitle


\section{Introduction}


\subsection{Motivation}

Injective modules are a fundamental concept in module theory over a ring. More abstractly, injective objects in an abelian category play a crucial role, particularly to provide homological invariants. Derived categories, introduced by J.-L.~Verdier in his thesis under the guidance of A.~Grothendieck, are associated with abelian categories and were developed to extend Serre’s duality theorem~\cite{ser:cga} to a relative setting~\cite{hart:rd}.

An injective hull of a given module can be understood as a smallest injective module containing it. Defined in terms of a chosen injective hull of the residue 
field of the local ring at each point, residual complexes arise as key objects in 
Grothendieck duality \cite{hart:rd}. The concrete construction of residual complexes is often subtle, due in part to the intricate nature of injective hulls.

In the literature, injective hulls of a given module are often regarded as identical, 
simply because they are isomorphic to each other. However, there is no canonical 
choice among isomorphisms relating two such injective hulls. Moreover, these isomorphisms reveal the rich set-theoretic structure.  For example, such an isomorphism may involve formalisms reminiscent of residues arising in contour 
integration of complex functions. Indeed local duality exhibits such a phenomenon \cite[(7.3)]{hu:pmzds}. More importantly, a concrete description of a residual complex requires distinguishing between different models of injective hulls of a given module.
 
One of the authors in his thesis under the guidance of J.~Lipman constructed injective hulls 
in a relatively canonical  way \cite{hu:pmzds}. Consider a local homomorphism 
$f\colon R\to S$ of Noetherian local rings, which induces a finitely generated residue field 
extension. Let $M$ be an $R$-module with zero-dimensional support, that is, every element 
of $M$ is annihilated by some power of the maximal ideal of $R$. The most important such
modules are injective hulls of the residue field of $R$. An $S$-module $f_\sharp M$ with 
zero-dimensional support was constructed functorially. If $M$ is an injective hull of the residue 
field of $R$, then $f_\sharp M$ is an injective hull of the residue field of $S$. Based on this 
relative construction, an explicit residual complex on a finitely generated algebra was 
canonically constructed from an explicitly given residual complex on its coefficient ring  
\cite{hu:ecrc}. Ideally, it is desirable to construct injective hulls in an absolute context, that is, 
without predefined injective hulls. This provides an impetus for the present paper.

\subsection{Basic Notions}

Injective modules are categorical notion. To be precise, a module $M$ over a commutative ring $A$ 
is called {\em injective} if the functor $\Hom_A(-,M)$ on the category of $A$-modules is exact. 
An $A$-module $E$ containing $M$ is called an {\em injective hull} of $M$ if any one of the 
following equivalent conditions is satisfied.
\begin{itemize}
\item $E$ is a minimal injective module containing $M$.
\item $E$ is a maximal essential extension of $M$.
\item $E$ is an injective module and is an essential extension of $M$.
\end{itemize}
The categorical definition of an injective $A$-module entails a condition that applies to {\em all}
 $A$-modules. It is not unexpected that the equivalence mentioned above relies on Zorn's lemma 
 for its proof. Moreover, the existence of an injective hull of $M$ likewise relies on Zorn's lemma 
 \cite{ec-sch:im}.
 
In this paper, we inspect the role of Zorn's lemma in the theory of injective modules 
and establish a computational foundation. Assume that $A$ is a Noetherian local ring and
$M$ is a module satisfying the initial condition introduced in Definition~\ref{defn:ic}. 
In Definition~\ref{MainConstruction}, we construct a module $E_1(M)$ containing $E_0(M):=M$ parallel to the construction of an algebraic closure of a field. Corollary~\ref{cor:initial} shows that $E_1(M)$ also satisfies the initial condition.
So we can construct $E_2(M)$ from $E_1(M)$ and, in general, construct $E_{i+1}(M)$ from $E_i(M)$ for any $i\geq 0$. Since $E_{i+1}(M)$ provides solutions to linear 
equations in $E_i(M)$, we call the direct limit $E(M)$ of $E_i(M)$ a {\em solution module} of $M$. Along the process, we choose 
a basis for a vector space using Zorn's lemma, which is necessary for infinite-dimensional 
cases but not needed if $M$ is finitely generated. This is the only place where Zorn's lemma 
occurs in our construction. If furthermore $M$ has zero-dimensional support, the solution 
module $E(M)$ turns out to be an injective hull of $M$. This fact requires 
also Zorn's lemma.

The set-theoretic aspect of injective modules appears from systems of linear equations.
Given an ideal $I$ of $A$, an element $\varphi\in\Hom_A(I,M)$ is called an $I$-system, 
since it represents a consistent system $\{aT=\varphi(a)\}_{a\in I}$
of $A$-linear equations with a common unknown $T$.
Solvability of $I$-systems is captured by the exactness of the sequence
\[
\Hom_A(A,M)\to \Hom_A(I,M)\to 0.
\]
The module $M$ is called $I$-closed if the above sequence is exact. Baer's criterion
asserts that $M$ is injective if and only if it is $I$-closed for every ideal $I$ of $A$
\cite{ba:agdsecag}.
The criterion connects categorical and set-theoretic notions via Zorn's lemma.

The table below outlines analogous concepts in the context of fields versus $A$-modules.
\hfill \break
\begin{center}{\begin{tabular}{c||c|c}
    fields & modules & modules \\ 
     & (existing terminology) & (our terminology) \\ \hline
    algebraic  equation & linear equation & linear equation \\
    algebraic  extension & essential extension & linear extension \\
    algebraically closed field & injective module & linearly closed module \\
    algebraic closure & injective hull & linear closure \\
\end{tabular}}\end{center}
\hfill \break
Let $\m$ be the maximal ideal of $A$. Significantly parallel to the first equivalent 
condition for injective hulls, we establish a universal property to prove the following 
result for an $A$-module $M$ that satisfies the initial condition. 
\begin{thm2*}
$E(M)$ is minimal among $\m$-closed modules containing $M$.
\end{thm2*}
\noindent
Exactly parallel to the second and the third equivalent conditions, we prove the following results 
if $M$ furthermore has zero-dimensional support.
\begin{thm4*}
$E(M)$ is a maximal linear extension of $M$.
\end{thm4*}
\begin{thm5*}
$E(M)$ is a linearly closed module and is a linear extension of $M$.
\end{thm5*}

Note that the stated properties in Theorems~\ref{thm:max} and \ref{thm:478987} for $E(M)$ are {\em a priori} different 
from one another.  We prove Theorem~\ref{thm:max} using the theory of associated primes. 
For Theorem~\ref{thm:478987}, our construction ensures that $E(M)$ is a linear extension of $M$.
To prove that $E(M)$ is linearly closed, we have two approaches. One follows 
Theorem~\ref{thm:max}. Another using Artin-Rees lemma is constructive. Of course, 
if we employ Zorn's Lemma, these properties for $E(M)$ become equivalent without invoking the 
theory of commutative Noetherian rings.

\subsection{Related Works}

The classical result of Matlis \cite{mat:imnr} asserts that an injective hull $E'$ of $A/\m$ 
is a direct limit of $E'_i=\Hom_A(A/\m^i,E')$ considered as submodules of $E'$, 
where the dimension of the vector space $E'_{i+1}/E'_i$ over $A/\m$ is computed. 
See also \cite[Proposition 7.5, Chapter II]{hart:rd}.
These $E'_i$ can be identified with solution modules $E_i(A/\m)$. While Matlis 
presupposes an injective hull $E'$ to prove his result, we construct $E_i(A/\m)$ 
from scratch and produce an injective hull $E(A/\m)$ of $A/\m$.

The main examples of explicit injective hulls all originate from inverse systems of polynomials 
over a field, a concept developed 
by Macaulay \cite{mac:atms}. See also Kucera \cite{kuc:edie}. The relative canonical injective hulls 
given in \cite{hu:pmzds} combine two special constructions, both of which relate to inverse systems: 
one by local cohomology under certain smooth condition and the other by continuous 
homomorphisms under certain finiteness condition. In particular, elements of these local cohomology
modules are described using generalized fractions, whose denominators resemble inverse monomials.
Explicit constructions of injective hulls in the literature all require extra conditions.
Look at a prime ideal $\p$ of positive hight of a commutative Noetherian ring $A$. 
Pournaki and Tousi \cite{po-to:edciim} construct explicitly an injective hull of $A/\p$ requiring that 
$\p A_\p$ is principal. In this paper, our explicit construction 
provides an injective hull for the residue field of an arbitrary Noetherian local ring with 
no additional conditions imposed.

Let $I$ be an ideal of $A$. An $I$-system in an injective module $E$ can be always solved.
However, it is not found in the literature how an explicitly solution can be found.
The second proof of Theorem~\ref{thm:478987} provides a constructive method to solve $I$-systems in an injective hull. In Proposition~\ref{prop:9yyyq4}, we work out certain systems of equations for a power series ring over a field, in which inverse polynomials occur as solutions. 

It is well known that every injective $A$-module decomposes as a direct sum of 
indecomposable injective modules. Each indecomposable injective module serves as an 
injective hull of the residue field of the local ring $A_\p$ for some prime ideal $\p$ 
of $A$. In this context, the structure of injective modules is elucidated by the structure of injective hulls of the residue field of each local ring. The breadth of applications of our construction of linear closure also becomes evident, since the residue field has zero-dimensional support and satisfies the initial condition, and with Zorn's lemma, the linear closure is revealed to be an injective hull. As an application of our construction, Theorem~\ref{cor:BC} improves Baer's criterion for modules with zero-dimensional support.

\subsection{Constructions and main results}

Starting from a module $M$ satisfying the initial condition (Definition~\ref{defn:ic}), 
we construct a linear extension $E_1(M)$ of $M$ (Definition~\ref{MainConstruction} and
Proposition~\ref{prop:85439}). 
Since $E_1(M)$ also satisfies the initial condition
(Corollary~\ref{cor:initial}), we can construct $E_2(M)$ from $E_1(M)$. In
general, we can construct $E_{i+1}(M)$ from $E_i(M)$ for $i\geq 1$.
We define the solution module $E(M)$ of $M$ as the direct limit of $E_i(M)$ (Definition~\ref{def:sm}).


\section{Solution module}\label{sec:sm}


Throughout this paper, $A$ is a commutative ring with an identity element. 
The structure of an $A$-module
$M$ can be understood through system
\begin{equation}\label{eq:closure}
\{a_{i1}T_1+\cdots+a_{i\ell}T_\ell=\alpha_i\}_{i\in\Gamma}
\end{equation}
of $A$-linear equations together with its solutions, where  $T_1,T_2,\ldots,T_\ell$ are unknowns,
$a_{ij}\in A$, $\alpha_i\in M$ and $\Gamma$ is an index set. A necessary
condition for (\ref{eq:closure}) having a solution is
\begin{eqnarray*}
b_1a_{i_1j}+\cdots+b_\ell a_{i_\ell j}=0 \text{ for $1\leq j\leq\ell$}& \implies &
b_1\alpha_{i_1}+\cdots+b_\ell \alpha_{i_\ell}=0,
\end{eqnarray*}
where $i_j\in\Gamma$ and $b_i\in A$. Such a condition on the module $M$ is called a 
{\em constraint} from the underlying ring $A$. 

A system of $A$-linear equations in $M$ is called {\em consistent}, if it satisfies all constraints from $A$. 
A consistent system of linear equations is simply called a {\em consistent system}.
For the case $\ell=1$, the system of equations (\ref{eq:closure}) is consistent if 
and only if $\alpha_i=f(a_{i1})$ for some $A$-linear map $f:I\to M$, where $I$ is 
the ideal generated by $a_{i1}$. Referring to an ideal $I$ of $A$, we call such 
a consistent system an $I$-system in $M$ and identify it with an element in 
$\Hom_A(I,M)$. A solution to an $I$-system $f$ in $M$ is an element $\alpha\in M$ 
such that $f(a)=a\alpha$ for all $a\in I$. Multiplying an element in $M$ by an 
element in $I$ gives rise to an $A$-linear map $M\to\Hom_A(I,M)$. The $A$-linear 
map is surjective if and only if every $I$-system in $M$ has a
solution. 

We rename a classical notion to exhibit its analogy with algebraic extension in the theory of fields.
\begin{defn}[Linear extension]
An $A$-module $N$ containing $M$ is called a {\em linear extension} of $M$, if 
every element of $N$ is a solution to a non-zero $I$-system in $M$. For a linear 
extension $N$ of $M$, we also say that $N$ is linear over $M$. 
\end{defn}
Clearly, $N$ is linear over $M$ if and only if, for every non-zero element $\alpha\in N$, 
there exists an element $a\in A$ such that $0\neq a\alpha\in M$. In the literature, the latter is taken as 
the definition of an essential extension. Assuming furthermore that $A$ 
is a Noetherian local ring with the maximal ideal $\m$ and the residue field $\kappa$. 
The {\em socle}  
of an $A$-module $N$, denoted by $\soc N$, is $\{\alpha\in N\colon \m\alpha=0\}$, which is the largest $A$-submodule naturally inheriting a structure as a $\kappa$-vector space.
\begin{lem}\label{lem:87643}
If $N$ is linear over $M$, then $\soc N=\soc M$.
\end{lem}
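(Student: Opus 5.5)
Since $M\subseteq N$, the socle of $M$ is contained in the socle of $N$: an element $\alpha\in M$ with $\m\alpha=0$ is also an element of $N$ annihilated by $\m$. So $\soc M\subseteq\soc N$, and the whole task is the reverse inclusion $\soc N\subseteq\soc M$. For this I will use the reformulation of linearity noted right after the definition: $N$ is linear over $M$ if and only if every non-zero $\alpha\in N$ admits some $a\in A$ with $0\neq a\alpha\in M$.

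Take $\alpha\in\soc N$. If $\alpha=0$, then $\alpha\in\soc M$ and there is nothing to prove, so assume $\alpha\neq 0$. Linearity supplies $a\in A$ with $0\neq a\alpha\in M$. Split according to whether $a$ is a unit, using that $A$ is local.

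If $a\in\m$, then $a\alpha=0$ because $\m\alpha=0$. This contradicts $a\alpha\neq 0$, so this case cannot occur.

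Hence $a\notin\m$, and since $A$ is local, $a$ is a unit. Then $\alpha=a^{-1}(a\alpha)$ lies in $M$, because $M$ is an $A$-submodule. Since $\m\alpha=0$, we get $\alpha\in\soc M$.

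This proves $\soc N\subseteq\soc M$, and combined with the first paragraph, $\soc N=\soc M$. The only step with real content is the case split on $a$, which is exactly where locality of $A$ is used. Noetherianity is not needed for this argument.
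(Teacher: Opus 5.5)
Your proof is correct and follows the paper's argument: $\soc M\subset\soc N$ is immediate, and for $0\neq\alpha\in\soc N$ the element $a$ with $0\neq a\alpha\in M$ must be a unit because $\m\alpha=0$, so $\alpha=a^{-1}(a\alpha)\in M$. Your remark that only locality of $A$, and not Noetherianity, is used is also accurate.
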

\begin{proof}
Clearly $\soc M\subset \soc N$, since $M$ is a submodule of $N$. To show 
$\soc N\subset \soc M$, we consider $0\neq\alpha\in\soc N$. There exists an element $a\in A$ such that
$0\neq a\alpha\in M$. Since $\m\alpha=0$, the element $a$ has to be invertible.
Therefore $\alpha=a^{-1}(a\alpha)\in M$ and hence $\alpha\in\soc M$.
\end{proof}

Starting with a module with the following condition, 
we would like to join solutions to $\m$-systems in the module.
\begin{defn}[Initial condition] \label{defn:ic}
We say that an $A$-module $M$ satisfies the 
initial condition if every $\m$-system in $\m M$ has a solution in $M$.
\end{defn}
The initial condition for $M$ is equivalent to the exactness of the sequence
\[
0\to\soc M\to M\to\Hom_A(\m,\m M)\to 0. 
\]
The condition $\soc M=M$ is equivalent to $\m M=0$. In such a case, the $A$-module 
structure on $M$ induces a natural vector space structure on $M$ over the residue 
field $\kappa$ of $A$; we refer to $M$ as a $\kappa$-vector space. The initial condition 
is satisfied if $M$ is a $\kappa$-vector space, for example, if $M=\kappa$. 
The initial condition does not always hold.
\begin{example}\label{eq:p1yh}
Consider the local ring $A=\kappa[X,Y]/\langle X^2,XY,Y^2\rangle$, where $\kappa$ is
a field. The maximal ideal $\m$ is generated by the images $x$ and $y$ of $X$ and $Y$. 
The $\m$-system in $\m$ given by $x\mapsto y$ and $y\mapsto x$ does not have a solution
in $A$.  Therefore $A$ as a module over itself does not satisfy the initial condition.
\end{example}

The $A$-module $\Hom_A(\m,M)/\Hom_A(\m,\m M)$ is a $\kappa$-vector space. 
Indeed, it is a subspace of the $\kappa$-vector space $\Hom_A(\m,M/\m M)$. 
\begin{defn}[Solution module up to the first order]\label{MainConstruction}\label{def:76167}
Let $A$ be a Noetherian local ring with the maximal ideal $\m$ and the residue field $\kappa$. 
Let $M$ be an $A$-module satisfying the initial condition. We choose an index set 
$\Gamma$ and $\{f_i\}_{i\in\Gamma}\subset\Hom_A(\m,M)$ whose images in the vector
space
$\Hom_A(\m,M)/\Hom_A(\m,\m M)$ form a $\kappa$-basis. Let $F$ be the free 
$A$-module with a basis $\{e_i\}_{i\in\Gamma}$. Let $N$ be the submodule of 
$F\oplus M$ generated by $(be_i,-f_i(b))$, where $b\in\m$ and $i\in\Gamma$. 
Depending on a choice of a $\kappa$-basis, we define $E_1(M):=(F\oplus M)/N$ and
call it a solution module of $M$ up to the first order. 
\end{defn}

If the vector space $\Hom_A(\m,M)/\Hom_A(\m,\m M)$ is zero, we write $E_1(M):=M$ 
as a convention. Since $(0\oplus M)\cap N=0$, we may regard $M$ as a submodule of 
$E_1(M)$. In Proposition~\ref{prop:9yyyq4}, we will see how inverse polynomials occur in solution modules. The negative exponents of monomials justify our choice of terminology regarding the order of solution modules. We remark that the existence of a basis of an 
arbitrary vector space uses Zorn's lemma. If $M$
is finitely generated, the construction of $E_1(M)$ does not involve Zorn's lemma.
\begin{prop}\label{prop:solution}
Every $\m$-system in $M$ has a solution in $E_1(M)$.
\end{prop}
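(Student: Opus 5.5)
Let $f\in\Hom_A(\m,M)$ be an $\m$-system in $M$; we look for $\alpha\in E_1(M)$ with $b\alpha=f(b)$ for all $b\in\m$, where $M$ is regarded as a submodule of $E_1(M)$ via $\beta\mapsto\overline{(0,\beta)}$. The plan is to decompose $f$ along the chosen basis, handle the basis part with the generators $e_i$, and absorb the remainder using the initial condition.

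First, since the images of $\{f_i\}_{i\in\Gamma}$ form a $\kappa$-basis of $\Hom_A(\m,M)/\Hom_A(\m,\m M)$, the class of $f$ is a finite $\kappa$-linear combination of them. Lift the coefficients to $a_{i_1},\dots,a_{i_r}\in A$. Then
\[
g:=f-\sum_{k=1}^r a_{i_k}f_{i_k}\in\Hom_A(\m,\m M).
\]
This step is legitimate because the $A$-module structure on the quotient factors through $\kappa$: the scalar action of $a\in A$ on a class is the action of its image in $\kappa$.

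Second, for each $i\in\Gamma$ the class $\varepsilon_i:=\overline{(e_i,0)}\in E_1(M)$ solves $f_i$. Indeed, for $b\in\m$ the relation $(be_i,-f_i(b))\in N$ gives
\[
b\varepsilon_i=\overline{(be_i,0)}=\overline{(0,f_i(b))}=f_i(b).
\]
Hence $\sum_k a_{i_k}\varepsilon_{i_k}$ solves $\sum_k a_{i_k}f_{i_k}$.

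Third, $g$ is an $\m$-system in $\m M$, so by the initial condition (Definition~\ref{defn:ic}) there is $\beta\in M$ with $b\beta=g(b)$ for all $b\in\m$. Then
\[
\alpha:=\beta+\sum_k a_{i_k}\varepsilon_{i_k}
\]
solves $f$ by linearity.

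The degenerate case where the quotient is zero is immediate: then $f$ itself lies in $\Hom_A(\m,\m M)$ and the initial condition applies directly, with $E_1(M)=M$.

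The only point needing care is the embedding of $M$ into $E_1(M)$, that is, $(0\oplus M)\cap N=0$. The paper already asserts this, so the argument can simply invoke it. The argument is otherwise routine.
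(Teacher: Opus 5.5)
Your proposal is correct and follows the paper's proof essentially verbatim: you write $f\equiv\sum_k a_{i_k}f_{i_k}$ modulo $\Hom_A(\m,\m M)$, solve the remainder in $M$ by the initial condition, and add $\sum_k a_{i_k}e_{i_k}$. Your extra remarks on the empty-basis case and on $(0\oplus M)\cap N=0$ are accurate, and they only make explicit what the paper leaves implicit.
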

\begin{proof}
Given an $\m$-system $f$ in $M$, we choose $a_1,\ldots,a_n\in A$ and 
$i_1,\ldots,i_n\in\Gamma$ such that $f-(a_1f_{i_1}+\cdots+a_nf_{i_n})$ is an 
$\m$-system in $\m M$. Let $\alpha\in M$ be a solution to the $\m$-system
$f-(a_1f_{i_1}+\cdots+a_nf_{i_n})$ in $\m M$. Then $\alpha+(a_1e_{i_1}+\cdots+a_ne_{i_n})\in E(M)$ is a 
solution to the $\m$-system $f$. Indeed,
\[
b(\alpha+a_1e_{i_1}+\cdots+a_ne_{i_n})=b\alpha+a_1f_{i_1}(b)+\cdots+a_nf_{i_n}(b)=f(b)
\]
for any $b\in\m$.
\end{proof}

\begin{cor}\label{cor:initial}
$E_1(M)$ satisfies the initial condition.
\end{cor}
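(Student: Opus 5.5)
The plan is to reduce the statement to Proposition~\ref{prop:solution}. The key observation is that $\m E_1(M)\subset M$, where $M$ is regarded as a submodule of $E_1(M)$. Once this is established, an $\m$-system in $\m E_1(M)$ is automatically an $\m$-system in $M$. Proposition~\ref{prop:solution} then supplies a solution in $E_1(M)$, which is exactly the initial condition for $E_1(M)$.

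First I would verify the inclusion $\m E_1(M)\subset M$. Every element of $E_1(M)=(F\oplus M)/N$ is the class of some $(a_1e_{i_1}+\cdots+a_ne_{i_n},\alpha)$ with $a_j\in A$ and $\alpha\in M$. For $b\in\m$, multiplying by $b$ gives the class of $(\sum_j ba_je_{i_j}, b\alpha)$. Since $ba_j\in\m$, the relations $(ba_je_{i_j},-f_{i_j}(ba_j))\in N$ show that this class equals the class of
\[
\Bigl(0,\;b\alpha+\sum_j f_{i_j}(ba_j)\Bigr),
\]
which lies in $0\oplus M$, identified with $M$. Hence $b\beta\in M$ for every $\beta\in E_1(M)$ and $b\in\m$. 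Since $\m E_1(M)$ consists of finite sums of such products, we get $\m E_1(M)\subset M$. In the degenerate case where $E_1(M)=M$ by convention, the statement is just the initial condition for $M$, which holds by hypothesis.

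Next, let $g\in\Hom_A(\m,\m E_1(M))$ be an $\m$-system in $\m E_1(M)$. Composing with the inclusions $\m E_1(M)\subset M$, we may regard $g$ as an element of $\Hom_A(\m,M)$, that is, an $\m$-system in $M$. By Proposition~\ref{prop:solution}, there is $\beta\in E_1(M)$ with $g(b)=b\beta$ for all $b\in\m$. This $\beta$ is the required solution, so $E_1(M)$ satisfies the initial condition.

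The only step needing care is the inclusion $\m E_1(M)\subset M$, and in particular the use of the identification of $M$ with its image in $E_1(M)$. That identification is justified by $(0\oplus M)\cap N=0$, as noted after Definition~\ref{MainConstruction}. Beyond this, the argument is a direct application of Proposition~\ref{prop:solution}.
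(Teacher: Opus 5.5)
Your proof is correct and matches the paper's own argument: the inclusion $\m E_1(M)\subset M$ turns every $\m$-system in $\m E_1(M)$ into an $\m$-system in $M$, which Proposition~\ref{prop:solution} then solves in $E_1(M)$. Your explicit check of that inclusion via the relations $(ba_je_{i_j},-f_{i_j}(ba_j))\in N$ fills in the step the paper only notes as following ``from the defining relation of $N$''.
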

\begin{proof}
Since $\m E_1(M)\subset  M$, every $\m$-system in 
$\m E_1(M)$ is also an $\m$-system in $M$. So it has a solution in $E_1(M)$. 
\end{proof}

Given $(a_1e_{i_1}+\cdots+a_ne_{i_n},\alpha)\in F\oplus M$, we denote its image in 
$E_1(M)$ by $a_1e_{i_1}+\cdots+a_ne_{i_n}+\alpha$. For instance, we may write 
$be_i=f_i(b)$ for any $b\in\m$ and $i\in\Gamma$. Therefore 
$\m E_1(M)\subset M$ from the defining relation of $N$. 
Since the image of $\{f_i\}_{i\in\Gamma}$ forms a $\kappa$-vector space basis, 
we may prove that the following conditions are equivalent for 
$a_1,\ldots,a_n\in A$ and $i_1,\ldots,i_n\in\Gamma$.
\begin{itemize}
\item $a_1,\ldots,a_n\in\m$
\item $a_1e_{i_1}+\cdots+a_ne_{i_n}\in M$
\item $a_1f_{i_1}+\cdots+a_nf_{i_n}\in \Hom_A(\m,\m M)$
\end{itemize}
The $A$-linear map $F\oplus M\to \Hom_A(\m,M)$ defined by 
\[
(a_1e_{i_1}+\cdots+a_ne_{i_n},\alpha)\mapsto a_1f_{i_1}+\cdots+a_nf_{i_n}
\]
gives rise to an exact sequence
\[
0\to M\to E_1(M)\to \Hom_A(\m,M)/\Hom_A(\m,\m M)\to 0.
\]
So what we construct is an extension of $\Hom_A(\m,M)/\Hom_A(\m,\m M)$ by $M$. 

\begin{prop}\label{prop:85439}
$E_1(M)$ is a linear extension of $M$.
\end{prop}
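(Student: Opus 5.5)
We have to show that for every nonzero $\xi\in E_1(M)$ there is some $a\in A$ with $0\neq a\xi\in M$. The plan is a case split on whether $\xi$ lies in $M$, using the equivalence stated just before the proposition: $a_1e_{i_1}+\cdots+a_ne_{i_n}\in M$ if and only if all $a_j\in\m$.

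If $\xi\in M$, take $a=1$ and there is nothing to prove. Otherwise write $\xi=a_1e_{i_1}+\cdots+a_ne_{i_n}+\alpha$ with $\alpha\in M$ and the $i_j$ distinct. Since $\xi\notin M$, the stated equivalence forces some $a_j$ to be a unit. For every $b\in\m$ we have $b\xi\in M$, because $\m E_1(M)\subset M$. Explicitly,
\[
b\xi=g(b)+b\alpha,\qquad\text{where } g:=a_1f_{i_1}+\cdots+a_nf_{i_n}.
\]

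So it suffices to find $b\in\m$ with $b\xi\neq 0$. Suppose instead that $b\xi=0$ for all $b\in\m$. Then $g(b)=-b\alpha$ for all $b\in\m$. Since $b\alpha\in\m M$, the map $g$ takes values in $\m M$, so $g\in\Hom_A(\m,\m M)$. By the stated equivalence (the third bullet), this gives $a_1,\ldots,a_n\in\m$, contradicting that some $a_j$ is a unit. Hence some $b\in\m$ satisfies $0\neq b\xi\in M$, and $E_1(M)$ is linear over $M$.

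The step that actually carries content is that equivalence, which the paper asserts with ``we may prove''. I would justify it from the $\kappa$-linear independence of the images of the $f_i$ in $\Hom_A(\m,M)/\Hom_A(\m,\m M)$. That independence gives the direction from $g\in\Hom_A(\m,\m M)$ to $a_j\in\m$: if some $a_j$ were a unit, the residues $\bar a_j$ would give a nontrivial $\kappa$-relation among the images of the $f_i$.

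For the direction from $a_1e_{i_1}+\cdots+a_ne_{i_n}\in M$ to $g\in\Hom_A(\m,\m M)$, I would use the well-defined map $E_1(M)\to\Hom_A(\m,M)/\Hom_A(\m,\m M)$ induced by $(\sum a_je_{i_j},\alpha)\mapsto\sum a_jf_{i_j}$. It is well defined because the generators $(be_i,-f_i(b))$ of $N$ map to $bf_i$, and $bf_i$ has values in $\m M$. It kills $M$, so if the element lies in $M$ then $g\in\Hom_A(\m,\m M)$. Combined with independence, this gives all $a_j\in\m$. I expect no real obstacle beyond this bookkeeping.
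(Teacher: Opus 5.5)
Your proposal is correct and follows the paper's proof: for $\xi\notin M$ the paper also uses $g\notin\Hom_A(\m,\m M)$ together with $b\xi=g(b)+b\alpha$ and $b\alpha\in\m M$. The only difference is that the paper picks $b\in\m$ with $g(b)\notin\m M$ directly, so $b\xi\in M\setminus\m M$, instead of arguing by contradiction; your requirement that the $i_j$ be distinct and your justification of the equivalence via the well-defined map $E_1(M)\to\Hom_A(\m,M)/\Hom_A(\m,\m M)$ supply details the paper leaves implicit.
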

\begin{proof}\mbox{}
Consider $\omega=a_1e_{i_1}+\cdots+a_ne_{i_n}+\alpha\in E_1(M)\setminus M$, where
$a_1,\ldots,a_n\in A$, $i_1,\ldots,i_n\in\Gamma$ and $\alpha\in M$. Since 
$a_1f_{i_1}+\cdots+a_nf_{i_n}\not\in\Hom_A(\m,\m M)$, there exists $b\in\m$ such that
$(a_1f_{i_1}+\cdots+a_nf_{i_n})(b)\in M\setminus \m M$. Then
$b\omega\in M\setminus \m M$. In particular,
$b\omega\neq 0$. 
\end{proof}
The following fact follows directly from Lemma~\ref{lem:87643}.
\begin{cor}\label{cor:985678}
$\soc E_1(M)=\soc M$.
\end{cor}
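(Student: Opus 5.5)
The statement to prove is Corollary~\ref{cor:985678}, that $\soc E_1(M)=\soc M$. The plan is to apply Lemma~\ref{lem:87643} directly, with $N=E_1(M)$.

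The hypothesis of that lemma is that $E_1(M)$ is linear over $M$, and this is exactly Proposition~\ref{prop:85439}. We also need $M$ to be a genuine submodule of $E_1(M)$. This holds because $(0\oplus M)\cap N=0$, so the composite $M\to F\oplus M\to E_1(M)$ is injective, as remarked after Definition~\ref{MainConstruction}. In the degenerate case where $\Hom_A(\m,M)/\Hom_A(\m,\m M)=0$, the convention $E_1(M)=M$ makes the statement trivial.

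With both facts in hand, Lemma~\ref{lem:87643} gives $\soc E_1(M)=\soc M$ at once. For the record, the mechanism is as follows. A nonzero socle element $\omega$ of $E_1(M)$ admits some $a\in A$ with $0\neq a\omega\in M$. Since $\m\omega=0$, this $a$ cannot lie in $\m$, so $a$ is a unit and therefore $\omega=a^{-1}(a\omega)\in M$.

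There is no real obstacle. The only point needing any care is the identification of $M$ with its image in $E_1(M)$, and that is already settled by the intersection computation above.
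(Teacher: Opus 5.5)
Your proposal is correct and matches the paper's argument: the corollary follows from Lemma~\ref{lem:87643} applied to $N=E_1(M)$, which is linear over $M$ by Proposition~\ref{prop:85439}. Your checks that $M$ embeds in $E_1(M)$ via $(0\oplus M)\cap N=0$ and your handling of the degenerate case $E_1(M)=M$ are fine additions.
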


Different choices of $\kappa$-bases give isomorphic solution modules up to the first order:
We may also choose another subset $\{f'_i\}_{i\in\Gamma}$ of $\Hom_A(\m,M)$, whose images in 
$\Hom_A(\m,M)/\Hom_A(\m,\m M)$ form a $\kappa$-basis. 
Let $F'$ be the free $A$-module with a basis $\{e'_i\}_{i\in\Gamma}$.
Let $E_1'(M)$ be the quotient module of 
$F'\oplus M$ modulo the submodule generated by $(be'_i,-f'_i(b))$, where $b\in\m$ and $i\in\Gamma$. 
\begin{prop}\label{prop:66772}
There is an isomorphism $E_1(M)\to E_1'(M)$ stabilizing $M$. 
\end{prop}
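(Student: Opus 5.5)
We construct the map $E_1(M)\to E_1'(M)$ by solving the $\m$-systems $f_i$ inside $E_1'(M)$. By Proposition~\ref{prop:solution}, applied to $E_1'(M)$, each $f_i$ has a solution $\omega_i\in E_1'(M)$. Concretely, by the argument in that proof, we write $f_i\equiv\sum_j c_{ij}f'_j$ modulo $\Hom_A(\m,\m M)$, with only finitely many $c_{ij}\in A$ nonzero. By the initial condition we choose $\alpha_i\in M$ solving the $\m$-system $f_i-\sum_j c_{ij}f'_j$ in $\m M$, and set $\omega_i=\alpha_i+\sum_j c_{ij}e'_j$.

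We define $\Phi\colon F\oplus M\to E_1'(M)$ by $e_i\mapsto\omega_i$ and the identity on $M$. Since $b\omega_i=f_i(b)$ for all $b\in\m$, the generators $(be_i,-f_i(b))$ of $N$ map to zero. Hence $\Phi$ descends to $\phi\colon E_1(M)\to E_1'(M)$, and $\phi$ restricts to the identity on $M$.

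To show $\phi$ is an isomorphism, we compare the two exact sequences
\[
0\to M\to E_1(M)\to \Hom_A(\m,M)/\Hom_A(\m,\m M)\to 0
\]
and the analogous one for $E_1'(M)$. We claim $\phi$ induces the identity on the quotient $V=\Hom_A(\m,M)/\Hom_A(\m,\m M)$. The image of $e_i$ in $V$ is the class of $f_i$. The image of $\omega_i$ in $V$ is the class of $\sum_j c_{ij}f'_j$, which equals the class of $f_i$. So the induced map on $V$ is the identity, the left vertical map is the identity of $M$, and the five lemma gives that $\phi$ is an isomorphism stabilizing $M$.

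The one delicate point is that the quotient maps $E_1(M)\to V$ and $E_1'(M)\to V$ are well defined and compatible with $\phi$. This is the exact sequence established just before Proposition~\ref{prop:85439}, and it needs $\alpha_i\in M$ to map to $0$, which is built into that construction. Alternatively, one can avoid the five lemma by building the inverse map symmetrically and checking that the composite is the identity on $M$ and induces the identity on $V$. Such an endomorphism $\psi$ of $E_1(M)$ satisfies that $\psi-\mathrm{id}$ maps $E_1(M)$ into $M$ and vanishes on $M$. That alone does not give invertibility, so the five lemma route is cleaner and is the one we adopt.
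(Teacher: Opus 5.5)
Your proof is correct, but it is organized differently from the paper's. You build one map $\phi$ and note that it is a morphism between the extensions $0\to M\to E_1(M)\to V\to 0$ and $0\to M\to E_1'(M)\to V\to 0$, where $V=\Hom_A(\m,M)/\Hom_A(\m,\m M)$. Since it induces the identity on $M$ and on $V$, the short five lemma finishes.

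The paper instead builds maps in both directions, $\Phi\colon E_1(M)\to E_1'(M)$ and $\Phi'\colon E_1'(M)\to E_1(M)$. It shows $\Phi'\circ\Phi$ is injective because it fixes $M$ and $\alpha-(\Phi'\circ\Phi)(\alpha)\in M$; this is your ``identity on $V$'' in disguise. It then deduces that $\Phi$ is injective. Surjectivity is proved separately: for $\alpha'\in E_1'(M)$, the paper solves the $\m$-system $b\mapsto b\alpha'$ in $M$ by some $\alpha\in E_1(M)$ (Proposition~\ref{prop:solution}). Then $\alpha'-\Phi(\alpha)$ lies in the socle of $E_1'(M)$, which is $\soc M$ by Corollary~\ref{cor:985678}.

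Your route is shorter, uses a single map, and avoids the socle computation. Its essential input is the exactness of the sequence displayed before Proposition~\ref{prop:85439}: the kernel is exactly $M$, which encodes the $\kappa$-linear independence of the classes of the $f_i$. The paper only sketches that fact. The paper's surjectivity argument is more intrinsic. It shows that any $A$-linear map $E_1(M)\to E'$ fixing $M$, into a linear extension $E'$ of $M$ with $\m E'\subset M$, is onto. This complements the injectivity in Proposition~\ref{prop:8577}.

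One correction to your final paragraph: the alternative you dismiss does work. If $\nu=\psi-\mathrm{id}$ maps $E_1(M)$ into $M$ and vanishes on $M$, then $\nu^2=0$, so $\psi$ is invertible with inverse $\mathrm{id}-\nu$. Applying this to both composites $\Phi'\circ\Phi$ and $\Phi\circ\Phi'$ makes $\phi$ bijective immediately. This is essentially the mechanism behind the paper's injectivity step. You do not rely on that remark, so your proof stands, but the claim that this ``does not give invertibility'' is false.
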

\begin{proof}
For $i\in\Gamma$, there exist $g_i\in\Hom_A(\m,\m M)$ and finitely many $i_1,i_2,\ldots\in\Gamma$ and $a_{i_1},a_{i_2},\ldots\in A$ such that 
$f'_i=g_i+a_{i_1}f_{i_1}+a_{i_2}f_{i_2}+\cdots$. With the initial condition, there 
exists $\alpha_i\in M$ 
such that $g_i(b)=b\alpha_i$ for $b\in\m$. The $A$-linear map
$\Phi'\colon E_1'(M)\to E_1(M)$ stablizing $M$ and given by 
$e'_i\mapsto \alpha_i+a_{i_1}e_{i_1}+a_{i_2}e_{i_2}+\cdots$ is well-defined. 
Similarly, we have an $A$-linear map $\Phi\colon E_1(M)\to E_1'(M)$ stabilizing $M$. 
The composition $(\Phi'\circ\Phi)\colon E_1(M)\to E_1(M)$ stabilizes $M$ and satisfies 
$\alpha-(\Phi'\circ\Phi)(\alpha)\in M$ for any $\alpha\in E_1(M)$. If $(\Phi'\circ\Phi)(\alpha)=0$,
then $\alpha\in M$ and hence $\alpha=(\Phi'\circ\Phi)(\alpha)=0$. In other words, 
$\Phi'\circ\Phi$ is one-to-one. It implies that $\Phi$ is one-to-one as well. 

To show $\Phi$ is onto, we consider $\alpha'\in E_1'(M)$, which gives rise to an $\m$-system
in $M$ by $b\mapsto b\alpha'$. Let $\alpha\in E_1(M)$ be a solution to the system. Then
$\beta:=\alpha'-\Phi(\alpha)\in\soc E_1(M)$. By Corollary~\ref{cor:985678}, $\beta\in M$.
Therefore $\alpha'=\Phi(\alpha+\beta)$, which means $\Phi$ is onto.
\end{proof}

While providing solutions to $\m$-systems in $M$, the module $E_1(M)$ does
not create new solutions to $\m$-systems in $\m M$.
\begin{prop}
If $\omega\in E_1(M)$ is a solution to an $\m$-system in $\m M$, 
then $\omega\in M$.
\end{prop}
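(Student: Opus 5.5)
The plan is to reduce to the initial condition on $M$ and then use Corollary~\ref{cor:985678}. Let $\omega\in E_1(M)$ solve an $\m$-system $g\in\Hom_A(\m,\m M)$, so that $b\omega=g(b)$ for all $b\in\m$. Since $g$ is an $\m$-system in $\m M$, the initial condition on $M$ gives $\alpha\in M$ with $g(b)=b\alpha$ for every $b\in\m$. Then $b(\omega-\alpha)=0$ for all $b\in\m$, so $\omega-\alpha\in\soc E_1(M)$. By Corollary~\ref{cor:985678}, which rests on Proposition~\ref{prop:85439} and Lemma~\ref{lem:87643}, we have $\soc E_1(M)=\soc M\subset M$. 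Therefore $\omega=\alpha+(\omega-\alpha)\in M$.

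There is no real obstacle; the argument is short once the earlier results are in place. The one point to state carefully is that the $\m$-system solved by $\omega$ is taken literally as a map $\m\to\m M$. That is exactly what allows the initial condition to produce the solution $\alpha$ inside $M$.

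Alternatively, one could argue directly from the presentation and avoid the corollary. Write $\omega=a_1e_{i_1}+\cdots+a_ne_{i_n}+\alpha$; then $b\mapsto b\omega$ equals $a_1f_{i_1}+\cdots+a_nf_{i_n}+(b\mapsto b\alpha)$. If this map lies in $\Hom_A(\m,\m M)$, then $a_1f_{i_1}+\cdots+a_nf_{i_n}\in\Hom_A(\m,\m M)$, because $b\alpha\in\m M$. By the listed equivalences this forces $a_1,\ldots,a_n\in\m$, and hence $\omega\in M$. I would present the first argument and mention the second only as a remark.
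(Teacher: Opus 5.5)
Your main argument is correct, but it takes a different route from the paper. The paper's proof is exactly the direct computation you mention only as a remark. It writes $\omega=a_1e_{i_1}+\cdots+a_ne_{i_n}+\alpha$ and notes that $(a_1f_{i_1}+\cdots+a_nf_{i_n})(b)=g(b)-b\alpha\in\m M$ for $b\in\m$. From the basis property of the images of the $f_i$ it concludes $a_1,\ldots,a_n\in\m$, and then rewrites $\omega=f_{i_1}(a_1)+\cdots+f_{i_n}(a_n)+\alpha\in M$.

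Your primary argument instead combines the initial condition with Corollary~\ref{cor:985678}, in the same way the paper handles surjectivity in Proposition~\ref{prop:66772}. The two routes buy different things:
\begin{itemize}
\item The paper's computation never uses the initial condition. It needs only that the images of the $f_i$ are $\kappa$-linearly independent modulo $\Hom_A(\m,\m M)$, so it holds for the quotient $(F\oplus M)/N$ as such.
\item Your argument does use the initial condition, but it uses nothing about the presentation of $E_1(M)$ except that $E_1(M)$ is linear over $M$. So it proves something more general: in any linear extension of a module $M$ satisfying the initial condition, a solution to an $\m$-system in $\m M$ already lies in $M$.
\end{itemize}
Note that your route does not avoid the basis property. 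It is packaged inside Proposition~\ref{prop:85439}, on which Corollary~\ref{cor:985678} rests.
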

\begin{proof}
We suppose that $\omega=a_1e_{i_1}+\cdots+a_ne_{i_n}+\alpha$ is a solution to
$f\in\Hom_A(\m,\m M)$, where $a_1,\ldots,a_n\in A$, $i_1,\ldots,i_n\in\Gamma$ and 
$\alpha\in M$. For $b\in\m$,
\[
a_1f_{i_1}(b)+\cdots+a_nf_{i_n}(b)=b(a_1e_{i_1}+\cdots+a_ne_{i_n})=f(b)-b\alpha\in \m M.
\]
It follows that $a_1f_{i_1}+\cdots+a_nf_{i_n}\in\Hom_A(\m,\m M)$ and $a_1,\ldots,a_n\in\m$.
Therefore 
\[
\omega=a_1e_{i_1}+\cdots+a_ne_{i_n}+\alpha=f_{i_1}(a_1)+\cdots+f_{i_n}(a_n)+\alpha\in M.
\]
\end{proof}

The $A$-module $E_1(M)$ is a smallest module containing $M$, that provides
solutions to $\m$-systems in $M$.
This can be stated as a universal property:
\begin{prop}\label{prop:8577}
Let $E'$ be an $A$-module containing $M$. If every $\m$-system in $M$ has a 
solution in $E'$, there exists a one-to-one $A$-linear map $E_1(M)\to E'$ stablizing $M$.
\end{prop}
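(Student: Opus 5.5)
We construct the map directly from the presentation $E_1(M)=(F\oplus M)/N$ of Definition~\ref{MainConstruction}. For each $i\in\Gamma$, the $\m$-system $f_i$ in $M$ has, by hypothesis, a solution $\omega_i\in E'$, so that $b\omega_i=f_i(b)$ for all $b\in\m$. Define an $A$-linear map
\[
\Psi\colon F\oplus M\to E', \qquad (a_1e_{i_1}+\cdots+a_ne_{i_n},\alpha)\mapsto a_1\omega_{i_1}+\cdots+a_n\omega_{i_n}+\alpha .
\]
This is well defined because $F$ is free on $\{e_i\}$ and $M\subset E'$. Each generator $(be_i,-f_i(b))$ of $N$ maps to $b\omega_i-f_i(b)=0$, so $\Psi$ vanishes on $N$. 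Hence $\Psi$ descends to a map $\Phi\colon E_1(M)\to E'$. On $M$ this map is the inclusion, so $\Phi$ stabilizes $M$.

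It remains to show that $\Phi$ is one-to-one; I expect this to be the only substantive step. Suppose $\omega\in E_1(M)$ satisfies $\Phi(\omega)=0$ but $\omega\neq 0$. If $\omega\in M$, then $\omega=\Phi(\omega)=0$, a contradiction. Otherwise $\omega\notin M$, and we use Proposition~\ref{prop:85439}, or more precisely its proof: there is $b\in\m$ with $0\neq b\omega\in M$. Then
\[
0=b\Phi(\omega)=\Phi(b\omega)=b\omega\neq 0,
\]
because $\Phi$ is the identity on $M$. This contradiction shows $\ker\Phi=0$.

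A cleaner way to phrase the injectivity is that $\ker\Phi\cap M=0$, and a linear extension has no nonzero submodule meeting $M$ trivially, so $\ker\Phi=0$. Either way, the argument needs only the linearity of $E_1(M)$ over $M$ and the fact that $\Phi$ restricts to the identity on $M$. No choice of basis of $E'$ and no initial condition on $E'$ is needed.
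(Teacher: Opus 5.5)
Your proof is correct. The map is constructed exactly as in the paper; only the injectivity step differs.

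The paper argues directly from the presentation. It writes a kernel element as $a_1e_{i_1}+\cdots+a_ne_{i_n}+\alpha$ and multiplies by $b\in\m$ to obtain $(a_1f_{i_1}+\cdots+a_nf_{i_n})(b)=-b\alpha\in\m M$. The basis property of the $f_i$ modulo $\Hom_A(\m,\m M)$ then forces $a_1,\ldots,a_n\in\m$. Hence the element equals $f_{i_1}(a_1)+\cdots+f_{i_n}(a_n)+\alpha\in M$, where the map is the identity, so it vanishes.

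You instead cite Proposition~\ref{prop:85439} and use the general principle that a homomorphism out of a linear (essential) extension is injective as soon as it is injective on the base module, because every nonzero submodule of a linear extension of $M$ meets $M$ nontrivially.

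Both arguments rest on the same fact: $a_1f_{i_1}+\cdots+a_nf_{i_n}\in\Hom_A(\m,\m M)$ forces every $a_j\in\m$. That fact is exactly what drives the proof of Proposition~\ref{prop:85439}. The paper re-derives it inline, so its proof is self-contained and shows concretely that any kernel element already lies in $M$. Your version isolates the conceptual reason: the universal property is a formal consequence of $E_1(M)$ being a linear extension of $M$, and nothing about $E'$ beyond containing solutions is used.

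A small simplification: you need only the statement of Proposition~\ref{prop:85439}, not its proof. Any $a\in A$ with $0\neq a\omega\in M$ gives the same contradiction, so you do not need $b\in\m$.
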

\begin{proof}
For $i\in\Gamma$, let $\beta_i\in E'$ be a solution to $f_i$, that is, $f_i(b)=b\beta_i$ 
for $b\in\m$. With the notation in Definition~\ref{MainConstruction}, the map $F\to E'$ given by 
$e_i\mapsto\beta_i$ induces an $A$-linear map $E_1(M)\to E'$ stablizing $M$. 
Consider an element $a_1e_{i_1}+\cdots+a_ne_{i_n}+\alpha$ in the kernel of 
the induced map, where $a_1,\ldots,a_n\in A$, $i_1,\ldots,i_n\in\Gamma$ and 
$\alpha\in M$. For $b\in\m$,
\[
a_1f_{i_1}(b)+\cdots+a_nf_{i_n}(b)=b(a_1\beta_{i_1}+\cdots+a_n\beta_{i_n})=-b\alpha\in\m M.
\]
In other words, $a_1f_{i_1}+\cdots+a_nf_{i_n}\in \Hom_A(\m,\m M)$. Therefore 
$a_1,\ldots,a_n\in\m$. The element
$a_1e_{i_1}+\cdots+a_ne_{i_n}n+\alpha=f_{i_1}(a_1)+\cdots+f_{i_n}(a_n)+\alpha\in M$
in the kernel is stable under the map $E_1(M)\to E'$ and hence vanishes.
In other words, the map $E_1(M)\to E'$ is one-to-one.
\end{proof}

By Corollary~\ref{cor:initial}, we may repeat the construction of solution modules. 
Starting from $E_0(M):=M$, we define $E_i(M):=E_1(E_{i-1}(M))$ for $i\geq 1$ and
call $E_i(M)$ the solution module of $M$ up to the $i$-th order.
We obtain a filtration
\[
0\subset E_0(M)\subset E_1(M)\subset E_2(M)\subset\cdots
\]
of linear extensions of $M$ such that, for $i\geq 0$, $\m E_{i+1}\subset E_i$ and every $\m$-system 
in $E_i(M)$ has a solution in $E_{i+1}(M)$. In other words, there is a well-defined 
surjective $A$-linear map $E_{i+1}(M)\to\Hom_A(\m,E_i(M))$.
We have an exact sequence
\begin{equation}\label{eq:781677}
0\to E_i(M)\to E_{i+1}(M)\to\Hom_A(\m,E_i(M))/\Hom_A(\m,E_{i-1}(M))\to 0.
\end{equation}
for $i\geq 1$.

\begin{prop}\label{prop:50163}
Every $\m^i$-system in $E_j(M)$ has a solution in $E_{i+j}(M)$.
\end{prop}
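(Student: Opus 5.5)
We argue by induction on $i$, the case $i=0$ being trivial ($\m^0=A$, and $f(1)\in E_j(M)$ solves $f$) and the case $i=1$ being Proposition~\ref{prop:solution} applied to $E_j(M)$. This uses that $E_j(M)$ satisfies the initial condition (Corollary~\ref{cor:initial}) and that $E_{j+1}(M)=E_1(E_j(M))$. Rather than solving an $\m^i$-system in one stroke, we plan to raise the order one step at a time. The key claim is:

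\emph{for $k\geq 1$, every $\m^k$-system $f$ in $E_j(M)$ is the restriction to $\m^k$ of some $\m^{k-1}$-system $g$ in $E_{j+1}(M)$.}

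Iterating the claim gives a chain $f=f_0,f_1,\dots,f_i$, where $f_t$ is an $\m^{i-t}$-system in $E_{j+t}(M)$ extending $f_{t-1}$. The last map $f_i\colon A\to E_{i+j}(M)$ extends $f$, so $\omega:=f_i(1)$ satisfies $a\omega=f_i(a)=f(a)$ for all $a\in\m^i$, as required.

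To prove the claim, fix $a\in\m^{k-1}$. The rule $b\mapsto f(ab)$, $b\in\m$, is an $\m$-system in $E_j(M)$, so by Proposition~\ref{prop:solution} it has a solution $\gamma_a\in E_{j+1}(M)$. Two solutions differ by an element of $\soc E_{j+1}(M)$. Since $E_{j+1}(M)$ is linear over $M$ (Proposition~\ref{prop:85439}, iterated), Lemma~\ref{lem:87643} gives $\soc E_{j+1}(M)=\soc M\subset E_j(M)$. We make the choice linear by taking generators $a_1,\dots,a_s$ of $\m^{k-1}$ (Noetherianity), choosing $\gamma_l:=\gamma_{a_l}$, and setting $g(\sum c_la_l):=\sum c_l\gamma_l$.

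If $\sum c_la_l=0$, then $b\sum c_l\gamma_l=f(b\sum c_la_l)=0$ for all $b\in\m$, so $\sum c_l\gamma_l\in\soc M$. Hence $g$ is well defined only as a map $\m^{k-1}\to E_{j+1}(M)/\soc M$. It satisfies $b\,g(a)=f(ab)$, and for $a\in\m^k$ we get $b(g(a)-f(a))=0$, so $g(a)\equiv f(a)$ modulo $\soc M$.

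The main obstacle is removing this socle ambiguity, i.e. lifting $g$ to an honest $A$-linear map that agrees with $f$ exactly on $\m^k$. The relations among the $\gamma_l$ define a homomorphism from the syzygy module $Z\subset A^s$ of $(a_1,\dots,a_s)$ into $\soc M$, and we need to show it is harmless. What we would try first is to adjust the $\gamma_l$ by socle elements so that they satisfy the relations coming from $Z$, using that such a homomorphism kills $\m Z$ and so factors through the $\kappa$-vector space $Z/\m Z$. Failing that, we would avoid the issue by a different decomposition: apply the induction hypothesis to the $\m^{k-1}$-systems $a\mapsto f(ab)$ for the finitely many generators $b$ of $\m$. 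The resulting map $\m\to E_{i+j-1}(M)$ is well defined modulo elements annihilated by $\m^{i-1}$, and we would correct it in the same manner. Simultaneously, we would use the equality $g\equiv f$ on $\m^k$ to fix the values on $\m^k$.
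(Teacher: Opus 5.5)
Your overall architecture is sound and is the paper's induction unrolled: lift an $\m^k$-system in $E_j(M)$ to an $\m^{k-1}$-system in $E_{j+1}(M)$, iterate down to an $A$-system, and take its value at $1$. But the proof of the lifting claim has a genuine gap exactly where you place the ``main obstacle''. You never produce a well-defined $A$-linear $g\colon\m^{k-1}\to E_{j+1}(M)$ agreeing with $f$ on $\m^k$; you only list things you would try.

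The missing idea is to choose $a_1,\dots,a_s$ to be a \emph{minimal} generating set of $\m^{k-1}$. By Nakayama their images form a $\kappa$-basis of $\m^{k-1}/\m^k$, so every relation $\sum c_la_l=0$ has all $c_l\in\m$. For $c_l\in\m$ the defining property of $\gamma_l$ gives $c_l\gamma_l=f(c_la_l)$. Hence $\sum c_l\gamma_l=f(\sum c_la_l)=0$ exactly, and there is no socle ambiguity at all. Your conclusion that $g$ is defined only modulo $\soc M$ comes from allowing a non-minimal generating set (e.g. a repeated generator); it is not an intrinsic obstruction. Agreement on $\m^k$ is then automatic: write $x\in\m^k=\m\cdot\m^{k-1}$ as $x=\sum b_la_l$ with $b_l\in\m$, so $g(x)=\sum b_l\gamma_l=\sum f(b_la_l)=f(x)$. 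This is precisely the paper's argument.

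Your first fallback, as stated, would not suffice. Knowing that the obstruction $\theta\colon Z\to\soc M$ kills $\m Z$ does not let you cancel it by socle corrections $s_l$. Such corrections change $\sum c_l\gamma_l$ by $\sum c_ls_l$, which is zero whenever all $c_l\in\m$. What you need is that $\theta$ vanishes on $Z\cap\m A^s$, which is in general strictly larger than $\m Z$; for minimal generators it is all of $Z$. Given that vanishing, $\theta$ factors through the image of $Z$ in $\kappa^s$ and extends $\kappa$-linearly to $\kappa^s$, which produces the corrections. The vanishing does hold, by the same one-line computation $\sum c_l\gamma_l=f(\sum c_la_l)=0$ for $c_l\in\m$, so the route can be rescued. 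But that computation is the entire content of the step, and it is absent from your proposal. Your second fallback is too vague to assess and would run into the same issue.
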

\begin{proof}
We prove the lemma by induction on $i$. The case $i=1$ is Proposition~\ref{prop:solution}. 
Assume that $i>1$ and the proposition holds for any $j$ and for powers of $\m$ less than $i$. 
Consider a consistent system $\varphi\colon\m^i\to E_j(M)$. Let $a_1,\ldots,a_n$ be minimal 
generators for $\m^{i-1}$. Fix a natural number $\ell$ less than or equal to $n$. The $\m$-system 
$b\mapsto\varphi(ba_\ell)$ has a solution $\alpha_\ell\in E_{j+1}(M)$, that is, 
$\varphi(ba_\ell)=b\alpha_\ell$ for $b\in\m$. If $\sum c_\ell a_\ell=0$
for $c_1,\ldots,c_n\in A$, then $c_\ell\in\m$ since $a_1,\ldots,a_n$ are minimal generators.
Hence $\sum c_\ell\alpha_\ell=\varphi(\sum c_\ell a_\ell)=0$. Therefore we have an 
$\m^{i-1}$-system in $E_{j+1}(M)$ given by $a_\ell\mapsto\alpha_\ell$ for $\ell=1,\ldots,n$.
The hypothesis of our induction gives a solution 
$\alpha\in E_{i+j}(M)$ to the $\m^{i-1}$-system, that is, $\alpha_\ell=a_\ell\alpha$ for all $i$. Clearly $\alpha$ is also 
a solution to the $\m^i$-system $\varphi$.
\end{proof}

\begin{defn}[Solution module]\label{def:sm}
The direct limit $E(M)$ of $\{E_i(M)\}_{i\geq 0}$  is called the solution module of $M$.
\end{defn}
Solution module $E(M)$ is a linear extension of $M$. Regarding $E_i(M)$ as a subset of $E(M)$, 
we have $E(M)=\cup_{i\geq 0}E_i(M)$. 


\section{Linear closure}\label{sec:lc}


\begin{defn}[Linear closure]
For an ideal $I$ of $A$, an $A$-module $M$ is called $I$-closed if
every $I$-system in $M$ has a solution. The module
$M$ is called {\em linearly closed} if it is $I$-closed for any ideal $I$ of $A$. A linear extension of $M$ is called a {\em linear closure} of $M$ if the extension 
is linearly closed. 
\end{defn}
With Zorn's lemma, linearly closed modules and injective modules become the same notion. 
This is Baer's criterion for a module to be injective. 
\begin{thm}\label{thm:5277}
Let $A$ be a Noetherian local ring with the maximal ideal $\m$, and let $M$ be an 
$A$-module satisfying the initial condition. Then $E(M)$ is minimal among $\m$-closed
modules containing $M$.
\end{thm}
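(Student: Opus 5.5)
Two things are needed: that $E(M)$ is $\m$-closed, and that it is minimal in the sense that any $\m$-closed module $E'$ containing $M$ receives a one-to-one $A$-linear map $E(M)\to E'$ stabilizing $M$. In particular, if $E'\subset E(M)$ is $\m$-closed and contains $M$, then $E'=E(M)$.

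\emph{Step 1: $E(M)$ is $\m$-closed.} Let $\varphi\colon\m\to E(M)=\bigcup_i E_i(M)$ be an $\m$-system. Since $A$ is Noetherian, $\m$ is generated by finitely many elements $b_1,\ldots,b_r$. Each $\varphi(b_k)$ lies in some $E_{i_k}(M)$, and the filtration is increasing, so all $\varphi(b_k)$ lie in $E_j(M)$ with $j=\max_k i_k$. Then $\varphi(\m)\subset E_j(M)$, so $\varphi$ is an $\m$-system in $E_j(M)$. By Proposition~\ref{prop:solution} applied to $E_j(M)$ (legitimate by Corollary~\ref{cor:initial} and induction), $\varphi$ has a solution in $E_{j+1}(M)\subset E(M)$.

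\emph{Step 2: the universal map.} Let $E'\supset M$ be $\m$-closed. By induction on $i$ we build compatible one-to-one maps $\Phi_i\colon E_i(M)\to E'$ stabilizing $M$, with $\Phi_{i+1}$ extending $\Phi_i$. Take $\Phi_0$ to be the inclusion. Given $\Phi_i$, identify $E_i(M)$ with its image $\Phi_i(E_i(M))\subset E'$. Every $\m$-system in $E_i(M)$ is then an $\m$-system in $E'$, hence has a solution in $E'$. Proposition~\ref{prop:8577}, applied to the module $E_i(M)$ (which satisfies the initial condition) inside $E'$, gives a one-to-one map $E_{i+1}(M)=E_1(E_i(M))\to E'$ stabilizing $E_i(M)$, that is, extending $\Phi_i$. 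Passing to the direct limit gives $\Phi\colon E(M)\to E'$. It is one-to-one, since any element of the kernel lies in some $E_i(M)$, where $\Phi_i$ is injective. It stabilizes $M$.

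\emph{Step 3: minimality.} Suppose $M\subset E'\subset E(M)$ with $E'$ $\m$-closed. We must show $E'=E(M)$, and we argue directly by showing $E_i(M)\subset E'$ for all $i$ by induction. The base case is $E_0(M)=M\subset E'$. Assume $E_i(M)\subset E'$ and let $\omega\in E_{i+1}(M)$. The map $b\mapsto b\omega$ is an $\m$-system in $E_i(M)$, because $\m E_{i+1}(M)\subset E_i(M)$. Since $E'$ is $\m$-closed, this system has a solution $\omega'\in E'$. Then $\omega-\omega'\in\soc E(M)$. Since $E(M)$ is linear over $M$, Lemma~\ref{lem:87643} gives $\soc E(M)=\soc M\subset M\subset E'$. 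Hence $\omega=\omega'+(\omega-\omega')\in E'$, completing the induction.

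The only delicate point is Step 1's reduction to a finite stage, which is exactly where Noetherianity of $A$ enters. The socle argument in Step 3 is also what makes minimality hold as genuine inclusion-minimality, not merely as the existence of an embedding.
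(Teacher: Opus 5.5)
Your proposal is correct. Steps 1 and 2 are exactly the paper's proof. Finite generation of $\m$ places any $\m$-system in $E(M)$ inside some $E_j(M)$, where it is solved in $E_{j+1}(M)$. Iterating Proposition~\ref{prop:8577} then embeds $E(M)$ into any $\m$-closed $E'\supset M$ over $M$.

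The paper stops there, reading ``minimal'' as this universal embedding property. Your Step 3 goes further and proves genuine inclusion-minimality: no proper $\m$-closed submodule of $E(M)$ contains $M$. It does this directly by induction on $i$, using $\m E_{i+1}(M)\subset E_i(M)$ and $\soc E(M)=\soc M$. The argument is correct and independent of Step 2.

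Your opening phrase ``in particular'' is slightly misleading. Inclusion-minimality does not follow formally from the mere existence of an injective map $E(M)\to E'$ stabilizing $M$, since an injective self-map of $E(M)$ need not be onto without some argument. Step 3 supplies exactly that missing argument, so nothing is lost, and it is a worthwhile supplement to the paper's proof.
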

\begin{proof}
Since $\m$ is finitely generated, an $\m$-system in $E(M)$ is actually an $\m$-system in certain $E_i(M)$
and hence has a solution in $E_{i+1}(M)$. If $M$ is a submodule of an $\m$-closed 
$A$-module $E'$, repeatedly applying Proposition~\ref{prop:8577}, 
we embed $E(M)$ into $E'$.
\end{proof}

To investigate maximal linear extensions, we use the theory of associated primes 
for Noetherian rings \cite[Chapter IV]{bo:ca1-7}, where the modules are not required 
to be finitely generated, cf. \cite[Theorem 6.5]{mats:crt}.
Let $A$ be a Noetherian local 
ring with the maximal ideal $\m$. The {\em support} of an $A$-module $N$ is the 
set consisting of those prime ideals 
$\p$ such that the localization $N_\p$ does not vanish. An {\em associated prime} 
of an $A$-module $N$ is a prime ideal of the form 
$\ann\alpha:=\{a\in A\colon a\alpha=0\}$ for some $\alpha\in N$. The support
and the set of associated prime of $N$ are denoted by $\supp(N)$ and $\Ass(N)$, 
respectively. 
It holds true that $\Ass(N)\subset\supp(N)$. Furthermore, the set
of minimal elements of $\Ass(N)$ and of $\supp(N)$ coincide.
If $N\neq 0$, then both $\supp(N)$ and $\Ass(N)$ are not empty. 
If an $A$-module $N'$ is a linear extension of $N$, then $\Ass(N')=\Ass(N)$. 

Following the terminology due to J.~Lipman \cite{hu:pmzds}, we say that an 
$A$-module $N$ has zero-dimensional support if every element of $N$ is 
annihilated by some power of $\m$. If $N$ has zero-dimensional support, 
then $\ann\alpha\not\subset\p$ for any non-zero $\alpha\in N$ and prime 
ideal $\p$ distinct from $\m$. Hence $N_\p=0$ and $\supp(N)=\{\m\}$. 
Conversely, if $\supp(N)=\{\m\}$, then 
$\Ass(A/\ann\alpha)=\Ass(A\alpha)=\supp(A\alpha)=\{\m\}$ for any non-zero 
element $\alpha\in N$. In other words, $\ann\alpha$ is a primary ideal belong to $\m$. Hence the element $\alpha$ is annihilated 
by some power of $\m$. This justifies the use of the term ``zero-dimensional support''. 
The module in Example~\ref{eq:p1yh} has zero-dimensional support but does not satisfy 
the initial condition. On the other hand, the following example satisfies the initial condition
but with the support not zero-dimensional. 

\begin{example}
Let $A$ be the power series ring $\kappa[\![X]\!]$ over a field $\kappa$. The support of 
its quotient field $\kappa(\!(X)\!)$ consists of the zero ideal besides the maximal ideal. As an 
$A$-module, $\kappa(\!(X)\!)$ is linearly closed. In particular, it satisfies the initial condition.
\end{example}

\begin{thm}\label{thm:max}
Let $A$ be a Noetherian local ring with the maximal ideal $\m$, and let $M$ be an $A$-module that satisfies 
the initial condition. If $M$ has zero-dimensional support, then $E(M)$ is a maximal 
linear extension of $M$.
\end{thm}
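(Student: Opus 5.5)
Suppose $E'$ is a linear extension of $E(M)$. Then $E'$ is also linear over $M$: for a nonzero $\omega\in E'$ pick $a$ with $0\neq a\omega\in E(M)$, then $b$ with $0\neq ba\omega\in M$. We must show $E'=E(M)$. The strategy is to show first that $E'$ has zero-dimensional support. Then we show that every element of $E'$ already lies in some $E_i(M)$, by induction on the least power of $\m$ annihilating it, using the $\m$-closedness of $E(M)$ from Theorem~\ref{thm:5277} and $\soc E'=\soc M$ from Lemma~\ref{lem:87643}.

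\emph{Step 1: zero-dimensional support of $E'$.} Since $E'$ is linear over $M$, we have $\Ass(E')=\Ass(M)$. As $M$ has zero-dimensional support and is nonzero (if $M=0$, then $E'$ linear over $0$ forces $E'=0$), $\Ass(M)=\{\m\}$. Hence $\Ass(E')=\{\m\}$. The minimal elements of $\supp(E')$ coincide with those of $\Ass(E')$, and every prime of $\supp(E')$ contains a minimal one, so $\supp(E')=\{\m\}$. By the discussion preceding the theorem, every element of $E'$ is killed by a power of $\m$. This is where the theory of associated primes is used.

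\emph{Step 2: induction.} Let $\omega\in E'$ with $\m^n\omega=0$; we show $\omega\in E(M)$ by induction on $n$. If $n\le 1$, then $\omega\in\soc E'=\soc M\subset E(M)$ by Lemma~\ref{lem:87643} applied to $E'$ over $M$. For $n>1$, each $b\omega$ with $b\in\m$ is killed by $\m^{n-1}$, so by induction $b\omega\in E(M)$. Thus $b\mapsto b\omega$ is an $\m$-system in $E(M)$. By Theorem~\ref{thm:5277} it has a solution $\alpha\in E(M)$, i.e.\ $b\alpha=b\omega$ for all $b\in\m$. Then $\omega-\alpha\in\soc E'=\soc M$, so $\omega=\alpha+(\omega-\alpha)\in E(M)$. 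Hence $E'=E(M)$, proving maximality.

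The main obstacle is Step 1: without zero-dimensional support there is no annihilating power of $\m$ on which to induct. I expect to handle it exactly via $\Ass(E')=\Ass(M)$ for linear extensions together with the coincidence of minimal associated and support primes, both quoted from the earlier discussion. The rest is routine once Theorem~\ref{thm:5277} and Lemma~\ref{lem:87643} are in hand.
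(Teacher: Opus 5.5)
Your proof is correct and follows the paper's route: you first get zero-dimensional support of $E'$ from $\Ass(E')=\Ass(M)=\{\m\}$, then induct on the annihilating power of $\m$ using $\soc E'=\soc M$. The only difference is cosmetic: you solve the $\m$-system $b\mapsto b\omega$ using the $\m$-closedness of $E(M)$ (Theorem~\ref{thm:5277}), whereas the paper tracks levels via Proposition~\ref{prop:solution} and obtains the sharper statement $\m^i\alpha=0\Rightarrow\alpha\in E_{i-1}(M)$.
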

\begin{proof}
Let $\alpha$ be a non-zero element of a linear extension $E'$ of $E(M)$. Since 
\[
\emptyset\neq\Ass(A/\ann\alpha)\subset\Ass(E')=\Ass(E(M))=\Ass(M)=\{\m\},
\] 
the maximal ideal $\m$ is the only associated prime of $A/\ann\alpha$. The maximal
ideal $\m$ is a minimal elements in $\supp(A/\ann\alpha)$. Hence $A/\ann\alpha$ 
has zero-dimensional support. The image of the identity of $A$ in $A/\ann\alpha$ is
annihilated by a power of $\m$. Equivalently, $\alpha$ is annihilated by some power of $\m$. We use induction 
on $i$ to prove the claim that 
$\m^i\alpha=0 \implies\alpha\in E_{i-1}(M)$.
The case $i=1$ is Lemma~\ref{lem:87643}. Now we assume $i>1$ and the claim holds for exponents
of $\m$ smaller than $i$. 
Fix an element $a\in\m$. Since $\m^{i-1}(a\alpha)=0$, by the induction hypothesis
$a\alpha\in E_{i-2}(M)$. The consistent system $\{aT=a\alpha\}_{a\in\m}$ in $E_{i-2}(M)$ has 
a solution $\beta\in E_{i-1}(M)$. Since $\m(\alpha-\beta)=0$, we conclude that 
$\alpha-\beta\in E_0(M)$ and $\alpha\in E_0(M)+E_{i-1}(M)=E_{i-1}(M)$.
\end{proof}

\begin{thm}\label{thm:478987}
Let $A$ be a Noetherian local ring with the maximal ideal $\m$, and let $M$ be an 
$A$-module that satisfies the initial condition. If $M$ has zero-dimensional support, 
then $E(M)$ is a linear closure of $M$. 
\end{thm}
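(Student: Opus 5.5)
We must show that $E(M)$ is linearly closed, that is, $I$-closed for every ideal $I$ of $A$. Linearity of $E(M)$ over $M$ is already built into the construction: each $E_{i+1}(M)$ is linear over $E_i(M)$ by Proposition~\ref{prop:85439}, and linear extensions compose. So the work is to solve an arbitrary $I$-system $\varphi\colon I\to E(M)$. The constructive route through the Artin--Rees lemma goes as follows. Since $I$ is finitely generated, $\varphi(I)$ is contained in some $E_j(M)$. Because $M$ has zero-dimensional support, so does $E(M)$: it is linear over $M$, hence $\Ass(E(M))=\{\m\}$, and the argument in the proof of Theorem~\ref{thm:max} applies. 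In particular the finitely many values $\varphi(x_1),\dots,\varphi(x_r)$ on generators of $I$ are killed by a common power $\m^s$, so $\varphi(\m^sI)=0$.

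By Artin--Rees there is a $c$ with $\m^{n}\cap I=\m^{n-c}(\m^c\cap I)\subset \m^{n-c}I$ for $n\ge c$. Take $n=s+c$, so that $\m^n\cap I\subset \m^sI$ and hence $\varphi$ vanishes on $\m^n\cap I$. Now define $\psi$ on $I+\m^n$ by $\psi(x+y)=\varphi(x)$ for $x\in I$ and $y\in\m^n$. This is well defined because $\varphi$ kills $I\cap\m^n$. Restricting $\psi$ to $\m^n\subset I+\m^n$ gives the zero map, which is certainly consistent, but that is not yet enough: we need a solution on all of $I+\m^n$.

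Two cases arise. If $I+\m^n=A$, then either $I=A$ and the solution is $\varphi(1)$, or $I\subset\m$ and $n\ge 1$, so $I+\m^n\subset\m\ne A$; hence $I+\m^n$ is a proper $\m$-primary ideal and $\m^n\subset I+\m^n\subset\m$. The key step is then to solve a $J$-system for an ideal $J$ containing $\m^n$. To do this, restrict $\psi$ to $\m^n$ (where it is $0$), and more usefully observe that every $J$-system with $\m^n\subset J$ extends to an $\m^n$-system problem as follows. Choose a solution $\alpha$ of the $\m^n$-system $\psi|_{\m^n}$; Proposition~\ref{prop:50163} produces one, and here we may simply take $\alpha=0$. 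That alone does not give a solution on $J$, so I would instead argue by descending induction on $J$ between $\m^n$ and $A$. Given $J\supsetneq\m^n$ and a $J$-system $\psi$, enlarge $J$ by one element: if $J\ne A$, pick $t\in (J:\m)\setminus J$, so that $J':=J+At$ satisfies $\m t\subset J$. The map $b\mapsto\psi(bt)$ is an $\m$-system in $E(M)$, which is $\m$-closed by Theorem~\ref{thm:5277}; let $\gamma$ be a solution and try $\psi'(t)=\gamma$. The obstacle is consistency, namely that $at\in J$ forces $\psi(at)=a\gamma$. If $a\in\m$ this holds by construction. If $a$ is a unit, then $t\in J$, which is excluded. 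So $\psi'$ is well defined on $J'$, and since $A/\m^n$ has finite length, finitely many steps reach $A$; the value at $1$ is then a solution.

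The hardest point is the consistency check when extending along $t$; the socle choice $t\in(J:\m)$ is exactly what reduces it to the $\m$-closedness given by Theorem~\ref{thm:5277}. The Artin--Rees step is needed only to move from an arbitrary $I$ to an $\m$-primary ideal $I+\m^n$, and it uses the zero-dimensional support hypothesis essentially.
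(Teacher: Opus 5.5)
Your proof is correct and is essentially the paper's second (Artin--Rees) proof of Theorem~\ref{thm:478987}. You extend $\varphi$ by zero to $I+\m^n$, then enlarge the ideal one element $t$ with $\m t\subset J$ at a time, solving the $\m$-system $b\mapsto\psi(bt)$; consistency holds because $at\in J$ with $t\notin J$ forces $a\in\m$. The differences are cosmetic: you use arbitrary socle elements of $A/J$ and run up to $A$ rather than stopping at $\m$, and the existence of $t$ deserves one line (take $k$ maximal with $\m^k\not\subset J$ and $t\in\m^k\setminus J$). The garbled case analysis for $I+\m^n=A$ and the aside about solving the $\m^n$-system can simply be deleted.
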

\begin{proof}
By Proposition~\ref{prop:85439}, $E(M)$ is a linear extension of $M$. To show that 
$E(M)$ is linearly closed, we need to find a solution to any given consistent system 
$\varphi\colon I\to E(M)$ for any proper ideal $I$ of $A$. By Proposition~\ref{prop:50163},
it suffices to extend $\varphi$ from $I$ to the maximal ideal $\m$ of $A$. For an element
$a\in\m\setminus I$, we would like to extend $\varphi$ from $I$ to $I+Aa$.
This is equivalent to find an element $\alpha\in E(M)$ such that
\begin{equation}\label{eq:99511}
b\in I\colon a \implies\varphi(ba)=b\alpha.
\end{equation}
With such an element $\alpha$, the map $\varphi$ can be extended to $I+Aa$ by setting 
$c+ba\mapsto\varphi(c)+b\alpha$ for $c\in I$ and $b\in A$.
By the ascending chain condition of the Noetherian ring, we can assume 
that there are no elements $a\in\m\setminus I$ and $\alpha\in E(M)$ satisfying the
condition (\ref{eq:99511}). Under the assumption, we claim that $I=\m$. 
 
Let $N$ be the submodule of $\m\oplus E(M)$ consisting of $(a,-\varphi(a))$, 
where $a\in I$. Through the canonical maps
$E(M)\to\m\oplus E(M)\to(\m\oplus E(M))/N$,
we may identify $E(M)$ with the submodule $(I\oplus E(M))/N$ of 
$(\m\oplus E(M))/N$. If $I\neq\m$, we choose an element $a\in\m\setminus I$.
There exists $b\in A$ such
that $ba\in I$ but $\varphi(b\alpha)\neq b\alpha$ for any $\alpha\in E(M)$.
Then $b(a,\alpha)\not\in N$ and
its image is in $E(M)$. This shows that $(\m\oplus E(M))/N$ is linear over $E(M)$.
By Theorem~\ref{thm:max}, $(\m\oplus E(M))/N=E(M)$, equivalently
$\m\oplus E(M)=I\oplus E(M)$. We arrive at the 
contradiction that $I=\m$.
\end{proof}
We remark that we do not have an algorithm to find an element $\alpha\in E(M)$ satisfying 
the condition (\ref{eq:99511}). Using Artin-Rees lemma, we provide yet another proof 
that $E(M)$ is linearly closed. Let $I$ be an ideal of $A$. Artin-Rees lemma asserts the 
existence of $k\in\N$ such that $\m^\ell\cap I=\m^{\ell-k}(\m^k\cap I)$ for every
$\ell\geq k$. It implies $\m^\ell\cap I\subset\m^{\ell-k}I$.
\begin{proof} 
We need to find a solution to $\varphi\colon I\to E(M)$. 
The image of $\varphi$ is annihilated by $\m^{\ell_1}$ for some $\ell_1\in\N$. Therefore 
$\m^{\ell_1} I\subset\ker\varphi$. By Artin-Rees lemma, 
$\m^{\ell_2}\cap I\subset\ker\varphi$ for some $\ell_2\geq\ell_1$. 
Mapping every element in $\m^{\ell_2}$ to zero, we can extend 
$\varphi$ to $I+\m^{\ell_2}$. So we may assume that $\m^{n+1}\subset I$ for some
$n\in\N$. Assume that there exists $b\in\m^n\setminus I$.
Let $\alpha\in E(M)$ be a solution to the $\m$-system $a\mapsto\varphi(ab)$,
that is, $\varphi(ab)=a\alpha$ for $a\in\m$.
Note that, if $c_1+a_1b=c_2+a_2b$ for $c_1,c_2\in I$ and $a_1,a_2\in A$, then
$(a_1-a_2)b=c_2-c_1\in I$. Since $b\not\in I$, the element $a_1-a_2$ cannot
be invertible, equivalently, $a_1-a_2\in\m$. Therefore
\[
\varphi(c_1)+a_1\alpha=
\varphi(c_2)+a_2\alpha+\varphi\left((a_2-a_1)b\right)-(a_2-a_1)\alpha=
\varphi(c_2)+a_2\alpha.
\]
So we may extend $\varphi$ to $I+Ab$ by $c+ab\mapsto\varphi(c)+a\alpha$ for $a\in I$ 
and $b\in\m$. If $\m^n\not\subset I+Ab$, we may further extend $\varphi$. By the ascending 
chain condition of Noetherian rings, $\varphi$ can be extended to an ideal containing $\m^n$. Keep on going,
$\varphi$ can be extended to an ideal containing $\m^{n-1}$, $\m^{n-2}$ and so on. Eventually $\varphi$ is extended to $\m$, over which it has a solution.
\end{proof}
If $M$ is finitely generated, the second proof provides a constructive method to solve the 
$I$-system $\varphi$. First we compute $I\cap\m^i$ for $i=1,2,\ldots$ to find an $n\in\N$ such 
that $\varphi(I\cap\m^n)=0$. Then we extend $\varphi$ from $I$ to $I+\m^n$ as in the proof 
so that we may assume $\m^n\subset I$ for some $n\in\N$. The next step is a membership 
problem to find an $n\in\N$ such that $\m^n\subset I$ but $\m^{n-1}\not\subset I$. It remains 
to solve some concrete $\m$-systems in $E(M)$, which can be work out within finite dimensional vector
spaces.

As an application of our construction, we may improve Baer's criterion for a module 
with zero-dimensional support.
\begin{cor}\label{cor:BC}
Let $M$ be an $A$-module with zero-dimensional support. If $M$ is $\m$-closed, 
it is linearly closed.
\end{cor}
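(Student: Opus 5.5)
The plan is to compare $M$ with its own solution module. First I check that $M$ satisfies the initial condition. Indeed, every $\m$-system in $\m M$ is in particular an $\m$-system in $M$, and $M$ is $\m$-closed, so it has a solution in $M$. Hence the construction of Section~\ref{sec:sm} applies: $E(M)$ is defined, and $A$ is the Noetherian local ring fixed in this section. Since $M$ has zero-dimensional support, Theorem~\ref{thm:478987} says $E(M)$ is linearly closed and linear over $M$. Theorem~\ref{thm:max} says it is a maximal linear extension of $M$.

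Next I show that the inclusion $M\subset E(M)$ is an equality. Theorem~\ref{thm:5277} gives that $E(M)$ is minimal among $\m$-closed modules containing $M$. Taking $E'=M$ in that minimality, which is the content of its proof via repeated use of Proposition~\ref{prop:8577}, we obtain a one-to-one $A$-linear map $E(M)\to M$ that stabilizes $M$. Call it $\Psi$.

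Composing $\Psi$ with the inclusion $M\subset E(M)$ gives an injective endomorphism of $E(M)$ fixing $M$ pointwise. To see that $E(M)=M$, take $\omega\in E(M)$ and consider $\omega-\Psi(\omega)$. Since $\Psi$ restricts to the identity on $M$, if $\omega-\Psi(\omega)\neq 0$ then linearity over $M$ gives $a\in A$ with $0\neq a(\omega-\Psi(\omega))\in M$. Applying $\Psi$, which fixes elements of $M$, yields
\[
a\Psi(\omega)-a\Psi(\Psi(\omega))=a\omega-a\Psi(\omega).
\]
This identity alone does not immediately close the argument, so a cleaner route is preferable. Apply the step-by-step argument to each $E_i(M)$: by the universal property in Proposition~\ref{prop:8577}, $E_1(M)$ embeds into $M$ over $M$. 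More directly, a basis element $e_i$ of $E_1(M)$ maps to a solution $\beta_i\in M$ of $f_i$. Then $e_i-\beta_i\in\soc E_1(M)=\soc M$ by Corollary~\ref{cor:985678}, so $e_i\in M$. Hence $E_1(M)=M$, and by induction $E_i(M)=M$ for all $i$. Therefore $E(M)=M$, and Theorem~\ref{thm:478987} shows that $M$ is linearly closed.

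The main obstacle is the step $E_1(M)=M$. One must make sure that the socle argument genuinely places $e_i$ inside $M$ once $M$ is regarded as a submodule of $E_1(M)$. The relevant identity is $b(e_i-\beta_i)=f_i(b)-f_i(b)=0$ for all $b\in\m$. (If the vector space $\Hom_A(\m,M)/\Hom_A(\m,\m M)$ is zero, then $E_1(M)=M$ by convention and there is nothing to prove.)
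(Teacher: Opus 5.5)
Your proof is correct and follows the paper's route: you check the initial condition, show $M=E_1(M)=E(M)$, and invoke Theorem~\ref{thm:478987}, with your socle argument supplying the justification of $E_1(M)=M$ that the paper leaves implicit. Even more directly, $\m$-closedness gives $\Hom_A(\m,M)=\Hom_A(\m,\m M)$, so the basis is empty and $E_1(M)=M$ by convention; and the detour you abandoned also closes at once, since $\Psi(\Psi(\omega))=\Psi(\omega)$ means injectivity of $\Psi$ applied to $\omega-\Psi(\omega)$ gives $\omega=\Psi(\omega)\in M$.
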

\begin{proof}
The module $M$ satisfies the initial condition, so we may construct $E_1(M)$.
As an $\m$-closed module, $M$ does not introduce new elements to its solution module, 
that is, $M=E_1(M)=E(M)$. Since $M$ has zero-dimensional support, $M$ is linearly
closed by Theorem~\ref{thm:478987}.
\end{proof}

Our construction of solution module does not necessarily preserves vector space: Begin with 
an $A$-module $M$ satisfying $\m M=0$, it is not generally the case that $\m E(M)=0$. 
However, our construction  
preserves socles by Lemma~\ref{lem:87643}. Furthermore, modules with zero-dimensional 
support are also preserved.
\begin{prop}
Let $M$ be an $A$-module satisfying the initial condition.
If $M$ has zero-dimensional support, so does $E(M)$.
\end{prop}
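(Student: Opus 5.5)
Since $E(M)=\bigcup_{i\geq 0}E_i(M)$, it suffices to show by induction on $i$ that every element of $E_i(M)$ is annihilated by some power of $\m$. The base case $i=0$ is the hypothesis on $M=E_0(M)$. For the inductive step I use the relation $\m E_{i+1}(M)\subset E_i(M)$, which comes from the defining relations $be_j=f_j(b)$ in Definition~\ref{MainConstruction} applied to $E_i(M)$.

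So let $\omega\in E_{i+1}(M)$. Since $\m$ is finitely generated, choose generators $x_1,\ldots,x_r$ of $\m$. Each $x_k\omega$ lies in $E_i(M)$, so by the induction hypothesis there is $n_k$ with $\m^{n_k}x_k\omega=0$. Put $n=\max_k n_k$. Every element of $\m^{n+1}$ is a sum of terms $c\,x_k$ with $c\in\m^n$, so $\m^{n+1}\omega=0$. This completes the induction.

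Alternatively, one can argue directly from Theorem~\ref{thm:max}'s proof idea: $E(M)$ is a linear extension of $M$, so $\Ass(E(M))=\Ass(M)=\{\m\}$, and then each $A/\ann\alpha$ has $\m$ as its only associated prime, hence zero-dimensional support. The elementary induction above seems cleaner and avoids associated primes. The only point needing care is the finite generation of $\m$, which is what makes the exponents uniform; this is guaranteed because $A$ is Noetherian. I see no real obstacle beyond this.
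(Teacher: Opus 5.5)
Your proof is correct and is essentially the paper's argument: the paper likewise uses $\m E_1(M)\subset M$ and a finite generating set of $\m$ to obtain a uniform $n$ with $\m^{n}a_j\omega=0$, hence $\m^{n+1}\omega=0$, and then iterates through the $E_i(M)$ to reach $E(M)$. Your alternative via $\Ass(E(M))=\Ass(M)=\{\m\}$, the argument used in the proof of Theorem~\ref{thm:max}, is also valid.
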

\begin{proof}
Consider $\omega\in E_1(M)$. 
Assume that $\m$ is generated by $a_1,\ldots,a_m$. Since $a_j\omega\in M$, there exits $n\in\N$
such that $\m^n a_j\omega=0$ uniformly for $1\leq j\leq m$. Then $\m^{n+1}\omega=0$. Therefore
$E_1(M)$ has zero-dimensional support, as do every $E_i(M)$ and $E(M)$.
\end{proof}

We provide examples to illustrate our construction of explicit linear closures.
For a Noetherian local ring $A$ containing a field $\kappa$ with the maximal
ideal $\m$, we consider the $A$-module $\Hom_\kappa^c(A,\kappa)$ 
consisting of $\kappa$-linear maps $\varphi\colon A\to\kappa$ continuous in the sense that $\varphi(\m^i)=0$ for some $i$. If the residue field of $A$ is finitely generated as a $\kappa$-vector space, then $\Hom_\kappa^c(A,\kappa)$ is an injective hull of the residue field of $A$; see \cite[(3.4)]{hu:pmzds} for a  general statement in the relative situation. 

The special case $A=\kappa[\![x_1,\ldots,x_n]\!]$ explains the idea of Macaulay 
\cite{mac:atms}. For $i_1,\ldots,i_n>0$, we denote by
$x_1^{-i_1}\cdots x_n^{-i_n}$ the continuous $\kappa$-linear map $A\to\kappa$ sending the 
monomial $x_1^{i_1-1}\cdots x_n^{i_n-1}$ to $1$ and sending other monomials to zero.
These continuous $\kappa$-linear maps, called inverse monomials,
form a basis for $\Hom_\kappa^c(A,\kappa)$ as a $\kappa$-vector space. 
We call elements of $\Hom_\kappa^c(A,\kappa)$ inverse polynomials.
The module structure of $\Hom_\kappa^c(A,\kappa)$ is described by
\[
(x_1^{j_1}\cdots x_n^{j_n})(x_1^{-i_1}\cdots x_n^{-i_n})=
\begin{cases}
x_1^{j_1-i_1}\cdots x_n^{j_n-i_n},&\text{if $i_1>j_1,\ldots,i_n>j_n$;}\\
0,&\text{otherwise.}
\end{cases}
\]
Let $E'_\ell$ be the $A$-module generated by  $x_1^{-i_1}\cdots x_n^{-i_n}$, 
where $i_1+\cdots+i_n\leq\ell+n$. For instance, 
$E'_0=Rx_1^{-1}\cdots x_n^{-1}=\kappa x_1^{-1}\cdots x_n^{-1}$ can be identified 
with $\kappa$. Clearly, $\m E'_{\ell+1}=E'_\ell$. In general, 
$\m^j E'_{\ell+j}=E'_\ell$ for $j,\ell\geq 0$. We may assign $i_1+\cdots+i_n-n$ to be the order of
$x_1^{-i_1}\cdots x_n^{-i_n}$ in compliant with the identification of the coefficient field $\kappa$ with $E'_0$, where scalars have order zero.
\begin{prop}\label{prop:9yyyq4}
Let $A$ be the power series ring $\kappa[\![x_1,\ldots,x_n]\!]$ over a field $\kappa$. 
The $A$-module $\Hom_\kappa^c(A,\kappa)$ of inverse polynomials is isomorphic to the 
solution module $E(\kappa)$.
\end{prop}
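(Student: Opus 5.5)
We will compare the two filtrations level by level and build the isomorphism inductively. Put $E' := \Hom_\kappa^c(A,\kappa)$, filtered by the submodules $E'_\ell$ defined above. We will show that each $E'_{\ell+1}$ is a model of $E_1(E'_\ell)$, in the sense that it satisfies the universal property of Proposition~\ref{prop:8577} minimally. Concretely, we construct compatible isomorphisms $\Phi_\ell\colon E_\ell(\kappa)\to E'_\ell$ by induction on $\ell$ and then pass to the direct limit. Since $E'=\bigcup_\ell E'_\ell$, the limit map is an isomorphism $E(\kappa)\to E'$.

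\textbf{Key claim.} For each $\ell\ge 0$:
\begin{enumerate}
\item $\m E'_{\ell+1}\subset E'_\ell$, which is the displayed identity $\m E'_{\ell+1}=E'_\ell$;
\item every $\m$-system in $E'_\ell$ has a solution in $E'_{\ell+1}$;
\item $\soc E'=\kappa x_1^{-1}\cdots x_n^{-1}=E'_0$.
\end{enumerate}

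Part (3) is immediate from the multiplication rule. An inverse polynomial killed by all $x_j$ can contain no monomial with some exponent $i_j>1$.

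For (2), let $f\colon\m\to E'_\ell$ be an $\m$-system and write $u_j=f(x_j)$. Consistency gives $x_ku_j=x_ju_k$, which is a discrete ``closed form'' condition. We look for a single inverse polynomial $\omega$ of order $\le\ell+1$ with $x_j\omega=u_j$ for all $j$. We construct $\omega$ monomial by monomial. For a target monomial $x^{-I}$ with all $i_j\ge1$, choose any $j$ for which the coefficient of $x^{-I}$ in $u_j$ is relevant, and set the coefficient of $x^{-I-e_j}$ in $\omega$ accordingly. The relations $x_ku_j=x_ju_k$ guarantee that the assignments agree whenever a monomial is reached through two different indices.

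A cleaner route is to use that $E'$ is injective, via Matlis / Macaulay duality or \cite[(3.4)]{hu:pmzds}. Then $f$ has some solution $\omega\in E'$. Since $\m\omega\subset E'_\ell$ and $\m^{\ell+1}E'_\ell=0$, we get $\m^{\ell+2}\omega=0$. A direct inspection of monomials shows that the elements of $E'$ killed by $\m^{\ell+2}$ are exactly $E'_{\ell+1}$, so $\omega\in E'_{\ell+1}$. I would use this route if citing injectivity is allowed. Otherwise the explicit construction above works, with the check that monomials with some exponent $1$ are handled by the convention that $x_j$ kills them.

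\textbf{Induction.} Suppose $\Phi_\ell\colon E_\ell(\kappa)\cong E'_\ell$ has been constructed. By (2), every $\m$-system in $E_\ell(\kappa)\cong E'_\ell$ has a solution in $E'_{\ell+1}$. Proposition~\ref{prop:8577} then gives an injective map $E_{\ell+1}(\kappa)=E_1(E_\ell(\kappa))\to E'_{\ell+1}$ that extends $\Phi_\ell$.

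For surjectivity we mimic the surjectivity argument in Proposition~\ref{prop:66772}. Take $\alpha'\in E'_{\ell+1}$. By (1), $b\mapsto b\alpha'$ is an $\m$-system in $E'_\ell\cong E_\ell(\kappa)$, and it has a solution $\alpha\in E_{\ell+1}(\kappa)$. The difference $\alpha'-\Phi_{\ell+1}(\alpha)$ lies in $\soc E'=E'_0$ by (3), and $E'_0$ is the image of $\kappa=E_0(\kappa)$. Hence $\alpha'$ lies in the image of $\Phi_{\ell+1}$. The base case is $\kappa\cong E'_0$, via $1\mapsto x_1^{-1}\cdots x_n^{-1}$.

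\textbf{Main obstacle.} The expected difficulty is step (2), producing solutions of $\m$-systems inside $E'_{\ell+1}$ without presupposing that $E'$ is injective. I would first try the explicit coefficient-matching construction and reduce the compatibility check to the relations $x_ku_j=x_ju_k$ coming from consistency.
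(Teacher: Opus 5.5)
Your proof is correct, but it differs from the paper's in both main steps.

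\textbf{The paper's route.} It proves by induction that each $E'_\ell$ satisfies the initial condition. Solvability of $\m$-systems comes from a dimension count: $E'_{\ell+1}/E'_\ell$ embeds in $\Hom_A(\m,E'_\ell)/\Hom_A(\m,E'_{\ell-1})$, which embeds in $\Hom_\kappa(\m^{\ell+1}/\m^{\ell+2},\kappa)$, and the first space already has the full dimension $\binom{\ell+n}{\ell+1}$. It then uses the inverse monomials of degree $\ell+n+1$ as the basis in Definition~\ref{MainConstruction} and asserts that one checks directly that $(F\oplus E'_\ell)/N\simeq E'_{\ell+1}$.

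\textbf{Your route.} You solve the $\m$-systems by explicit coefficient matching. You then obtain the isomorphism abstractly: injectivity from Proposition~\ref{prop:8577}, and surjectivity from the socle argument of Proposition~\ref{prop:66772} together with $\soc E'=E'_0$.

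Your coefficient matching works as stated. Write $\omega=\sum_J c_Jx^{-J}$ and $(u_j)_I$ for the coefficient of $x^{-I}$ in $u_j$, with $e_j$ the $j$-th unit multi-index.
\begin{itemize}
\item The equations $x_j\omega=u_j$ say exactly that $c_{I+e_j}=(u_j)_I$ for all $j$ and all $I$ with positive entries.
\item When $j\neq k$ and $J_j,J_k\ge 2$, the two prescriptions for $c_J$ agree; compare coefficients of $x^{-(J-e_j-e_k)}$ in $x_ku_j=x_ju_k$.
\item The coefficient $c_{(1,\ldots,1)}$ is free.
\item Since $|J|=|I|+1$, the element $\omega$ stays in $E'_{\ell+1}$.
\end{itemize}
So you never need the injectivity of $\Hom_\kappa^c(A,\kappa)$. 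Invoking it would also cut against the paper's aim of not presupposing an injective hull.

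\textbf{What each approach buys.} Your argument leaves nothing to a direct inspection of a presentation. It works for every choice of bases at every stage, and the initial condition for $E'_\ell$ comes for free through $\Phi_\ell$. It is in fact a recognition criterion. Let $M$ satisfy the initial condition, and let $E'=\bigcup_\ell E'_\ell$ satisfy:
\begin{itemize}
\item $E'_0=M$ and $\m E'_{\ell+1}\subset E'_\ell$;
\item every $\m$-system in $E'_\ell$ is solvable in $E'_{\ell+1}$;
\item $\soc E'\subset M$.
\end{itemize}
Then $E'$ is isomorphic to $E(M)$ over $M$. The paper's route, in exchange, identifies the graded pieces explicitly as $E'_{\ell+1}/E'_\ell\cong\Hom_\kappa(\m^{\ell+1}/\m^{\ell+2},\kappa)$, recovering Matlis's dimension count. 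It also exhibits the inverse monomials of each order as the concrete new generators $e_i$.
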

\begin{proof} 
We use induction to prove that $E'_\ell$ satisfies the initial condition for $\ell>0$.
It is straightforward to check that any $\m$-system in $\m E'_1=E'_0$ has a 
solution in $E'_1$. In other words, $E'_1$ satisfies the initial condition. Assuming 
that $E'_\ell$ satisfies the initial condition for some number $\ell>0$, we aim to show that $E'_{\ell+1}$ does as well.

The kernel of the composition maps
\[
\Hom_A(\m,E'_\ell)\to\Hom_A(\m^{\ell+1},\m^\ell E'_\ell)\xrightarrow{\sim}\Hom_A(\m^{\ell+1},\kappa)
\]
is $\Hom_A(\m,E'_{\ell-1})$. The kernel of the composition maps
\[
E'_{\ell+1}\to\Hom_A(\m,E'_\ell)\to\Hom_A(\m,E'_\ell)/\Hom_A(\m,E'_{\ell-1})
\]
is $E'_\ell$. So we may regard $E'_{\ell+1}/E'_\ell$ and $\Hom_A(\m,E'_\ell)/\Hom_A(\m,E'_{\ell-1})$ as
$\kappa$-vector subspaces of $\Hom_A(\m^{\ell+1},\kappa)=\Hom_\kappa(\m^{\ell+1}/\m^{\ell+2},\kappa)$.
The dimension of $E'_{\ell+1}/E'_\ell$ is $\binom{\ell+n}{\ell+1}$. 
Indeed, a basis of $E'_{\ell+1}/E'_\ell$ is given by inverse monomials 
$x_1^{-i_1}\cdots x_n^{-i_n}$, where $i_1+\cdots+i_n=\ell+n+1$.
The dimension of $\Hom_\kappa(\m^{\ell+1}/\m^{\ell+2},\kappa)$ is $\binom{\ell+n}{\ell+1}$ as well. 
Indeed, a dual basis of $\m^{\ell+1}/\m^{\ell+2}$ is given by the monomials 
$x_1^{i_1}\cdots x_n^{i_n}$, where $i_1+\cdots+i_n=\ell+1$. Therefore
\begin{equation}\label{eq:71788}
E'_{\ell+1}/E'_\ell=\Hom_A(\m,E'_\ell)/\Hom_A(\m,E'_{\ell-1})=
\Hom_A(\m^{\ell+1},\kappa).
\end{equation}
Parallel to (\ref{eq:781677}), we have an exact sequence
\[
0\to E'_\ell\to E'_{\ell+1}\to\Hom_A(\m,E'_\ell)/\Hom_A(\m,E'_{\ell-1})\to 0.
\]
Given an $\m$-system $\varphi$ in $\m E'_{\ell+1}=E'_\ell$, there is an element 
$\alpha_1\in E'_{\ell+1}$ such that $a\mapsto\varphi(a)-a\alpha_1$ gives rise to an 
$\m$-system in $E'_{\ell-1}=\m E'_\ell$. The induction hypothesis provides a solution
$\alpha_0\in E'_\ell$ to the latter system. The element $\alpha_1+\alpha_0\in E'_{\ell+1}$
is a solution to $\varphi$. Therefore $E'_{\ell+1}$ satisfies the initial condition.

With the notation in Definition~\ref{def:76167}, we proceed to construct the solution 
module of $M:=E'_\ell$ up to the first order. The free module $F$ has an $A$-basis 
corresponding to the $\kappa$-basis of $E'_{\ell+1}/E'_\ell$ provided by
$x_1^{-i_1}\cdots x_n^{-i_n}$, where $i_1+\cdots+i_n=\ell+n+1$. While $M$ is
generated by those $x_1^{-i_1}\cdots x_n^{-i_n}$ with $i_1+\cdots+i_n<\ell+n+1$,
one checks directly that $(F\oplus M)/N\simeq E'_{\ell+1}$ as $A$-modules. In other
words, $E'_{\ell+1}\simeq E_1(E'_\ell)$ for $\ell\geq 0$. Both $E'_\ell$ and $E_\ell(\kappa)$
can be constructed iteratively starting from $\kappa$ as solution modules up to increasing orders. Therefore $\Hom_\kappa^c(A,\kappa)\simeq E(\kappa)$.
\end{proof}

Local cohomology provides another source of linear closures. Let $A$ be a
Gorenstein local ring of dimension $n$. The $n$-th 
local cohomology module $\mH^n_\m(A)$ supported in the maximal ideal $\m$ is a linear closure of the residue field of $A$. See \cite[(3.8)]{hu:pmzds} for another view in the relative situation. If $A$ is a power series ring
$\kappa[\![x_1,\ldots,x_n]\!]$ over a field $\kappa$, an isomorphism
$\mH^n_\m(A)\simeq\Hom_\kappa^c(A,\kappa)$ given by residues interprets local duality. See \cite[(5.9)]{hu:pmzds} for a general statement.

\end{document}